\documentclass[a4,12pt]{amsart}       % onecolumn (second format)

\usepackage[utf8]{inputenc}
\usepackage[T1]{fontenc}
\usepackage{yfonts}
\usepackage[english]{babel}
  \usepackage[normalem]{ulem}
\usepackage{lipsum}
\usepackage{amsmath}
\usepackage{amsthm}
\usepackage{amssymb}
\usepackage[shortlabels]{enumitem}
\usepackage{graphicx}
\usepackage{mathtools}
\usepackage{hyperref}
\usepackage{amsfonts}
\usepackage{latexsym}
\usepackage{amscd}

\usepackage[all]{xy}
\usepackage[dvipsnames]{xcolor}
\usepackage{stmaryrd}
\usepackage{tikz}
\usetikzlibrary{arrows.meta}

\DeclareMathOperator{\End}{End}
\DeclareMathOperator{\Hom}{Hom}

\DeclareMathOperator{\Path}{Path}

\DeclareMathOperator{\Sig}{\Sigma}

\def\Path{\text{Path}}
\def\Mod{\text{-Mod}}

\def\dualita#1#2{\mathrel{
                 \mathop{\vcenter{
                 \offinterlineskip
                 \hbox to 1.2truecm{$\mapsto$}%\kern ...
                 \hbox to 1.2truecm{$\mapsfrom$}}}%
                 }}
\usepackage{aliascnt}
\newtheorem{theorem}{Theorem}[section]
\newaliascnt{lemma}{theorem}
\newtheorem{lemma}[lemma]{Lemma}
\aliascntresetthe{lemma}

\newaliascnt{proposition}{theorem}
\newtheorem{proposition}[proposition]{Proposition}
\aliascntresetthe{proposition}

\newaliascnt{corollary}{theorem}
\newtheorem{corollary}[corollary]{Corollary}
\aliascntresetthe{corollary}

\newaliascnt{definition}{theorem}
\newtheorem{definition}[definition]{Definition}
\aliascntresetthe{definition}

\newaliascnt{remark}{theorem}
\newtheorem{remark}[remark]{Remark}
\aliascntresetthe{remark}
\newaliascnt{example}{theorem}
\newtheorem{example}[example]{Example}
\aliascntresetthe{example}

\usepackage{cleveref}
\crefname{theorem}{theorem}{theorems}
\Crefname{theorem}{Theorem}{Theorems}

\crefname{lemma}{lemma}{lemmas}
\Crefname{lemma}{Lemma}{Lemmas}

\crefname{proposition}{proposition}{propositions}
\Crefname{proposition}{Proposition}{Propositions}

\crefname{corollary}{corollary}{corollaries}
\Crefname{corollary}{Corollary}{Corollaries}

\crefname{definition}{definition}{definitions}
\Crefname{definition}{Definition}{Definitions}

\crefname{remark}{remark}{remarks}
\Crefname{remark}{Remark}{Remarks}

\crefname{example}{example}{examples}
\Crefname{example}{Example}{Examples}
\newcommand{\jj}{\hat{\text{\itshape\j}}}

\begin{document}
\title[injective envelopes of simple left ideals]{The injective envelope of simple modules over Leavitt path algebras I: \\  simple left ideals}
\author{Gene Abrams}
\address{Department of Mathematics, University of Colorado, Colorado
Springs, CO 80918 U.S.A., Orcid https://orcid.org/0000-0001-9046-9811}
\email{abrams@math.uccs.edu}
\author{Francesca Mantese}
\address{Dipartimento di Informatica, Universit\`{a} degli Studi di Verona, I-37134 Verona, Italy, Orcid https://orcid.org/0000-0002-2126-9426}
\email{francesca.mantese@univr.it}
\author{Alberto Tonolo}
\address{Dipartimento di Matematica ``Tullio Levi-Civita'', Universit\`{a} degli Studi di Padova, I-35121, Padova, Italy, Orcid https://orcid.org/0000-0002-9844-3998}
\email{alberto.tonolo@unipd.it}
\subjclass{16S88, 16D50}

\begin{abstract}
Let $K$ be any field and $E$ any directed graph.  We characterize up to isomorphism the simple (i.e., minimal) left ideals of the Leavitt path algebra $L_K(E)$. Then,
for each simple %(i.e., minimal) 
left ideal $I$ of %the Leavitt path algebra 
$L_K(E)$, we explicitly construct the injective envelope of $I$.  This result generalizes to all graphs $E$ and all simple left ideals in $L_K(E)$ the construction presented previously by the three authors for the specific case of the Jacobson algebra $R=K\langle X,Y | XY=1\rangle$ and the simple left $R$-ideal $R(1-YX)$.   Our method involves defining an $L_K(E)$-module structure on a $K$-vector space of infinite series.   We conclude the article by showing how our construction directly gives a description of the injective envelope of simple $L_K(E)$-modules arising from two types of infinite emitters in $E$.  
\end{abstract}

\maketitle

\section{Introduction}
%\B{  XXX  HERE IS A FIRST ATTEMPT AT AN INTRODUCTION FOR THE FIRST OF THE TWO ARTICLES.} 

The rings known as \emph{Leavitt path algebras}  were introduced two decades ago in \cite{AA05} and \cite{AMP07}.     Over the intervening twenty years, various aspects of  these rings (e.g., their multiplicative and ideal structure; their module-theoretic structure; $\mathbb{Z}$-graded structural aspects; connections between them and other topics including $C^*$-algebras and symbolic dynamics; etc.) have been widely and deeply studied.    
In the current article the three authors continue their investigation into the structure of the injective modules over  a Leavitt path algebra $L_K(E)$, where $E$ is an arbitrary directed graph and $K$ is any field.   

If $v$ is a vertex in $E$, then the left $L_K(E)$-ideal $L_K(E)v$ is simple (i.e., minimal) if and only if $v$ is a so-called {\it line point};  that is, no vertex $w$ for which there is a path from $v$ to $w$ admits either a bifurcation or a cycle based at it.  In particular, any sink in $E$ is trivially a line point.   As it turns out, every simple left $L_K(E)$-ideal is isomorphic to $L_K(E)v$ for some line point $v$ (see Proposition \ref{prop:simpleideallinepoint}).

In \cite{AMT21} the three authors explicitly identified the injective envelope of the simple left ideal $L_K(\mathcal{T})w$, where  $\mathcal{T} := \ \ \ \ \ \xymatrix{\bullet\ar@(ul,dl)%[]|{\,\,\,c}
\ar[r]&\bullet^w}$ (the ``Toeplitz'' graph).   This was  a nontrivial task; in the end, the description of this injective envelope involves the somewhat delicate notion of a ``formal series in paths ending at $w$'', see \cite[Corollary 29]{AMT21}.

In the current article, we extend this formal  series construction to give our main result, Theorem \ref{injhulltheorem}, in which we  explicitly construct the injective envelope of any simple left $L_K(E)$-ideal, for any graph $E$ and any field $K$.    (In the subsequent article \cite{Paper2} we construct the injective envelope of simple modules corresponding to some specified cycles, the so-called {\it exclusive} cycles, in any graph $E$.)

The article is arranged as follows.  
 In Section \ref{section:associativering}  we discuss some useful, general ring-theoretic results that are valid for  all rings with local units, and then  provide  the details undergirding the formal  series idea.  
 In Section~\ref{Lparesultssection}  we
 %we then recall for any Leavitt path algebras the notions of \emph{tree of a vertex}, \emph{hedhehog graph},
  present some  results about Leavitt path algebras which are central to the ideas contained in  the sequel.  In particular,   we show that any simple left ideal of $L_K(E)$ is isomorphic to a left ideal of the form $L_K(E)v$, where $v$ is a line point in $E$.  As well,  we describe the one-sided structure of those two-sided ideals of $L_K(E)$ that are generated by  subsets of  vertices of $E$. 
  
These initial sections put us in position to achieve in Section \ref{section:injenvline} our main result, Theorem \ref{injhulltheorem}, in which we describe, for any graph $E$ and any field $K$, the injective envelope of each simple left ideal of $L_K(E)$.  The notion of a formal series plays the key role.   The important contribution we make here is to show how this $K$-vector space of formal series admits  a left $L_K(E)$-module structure.  This module structure involves the (somewhat cumbersome) construction of a Cuntz-Krieger $E$-family of $K$-endomorphisms of this formal series $K$-space.  
  We finish the section by showing how to apply   Theorem~\ref{injhulltheorem}  to describe the injective envelope of two additional classes of simple modules given in \cite{AR14}.   
  
  We conclude the article with an Appendix, in which we present many of the germane computations verifying the Cuntz-Krieger property of the indicated $K$-endomorphisms.

\section[Some general ring-theoretic results]{Some general ring-theoretic results \\ for rings with local units}\label{section:associativering}
Let $S$ be an associative ring (we do not assume that $S$ has a multiplicative identity). 
%\B{More info about these rings can be found in ...}  

The ring $S$ is said to have a \emph{set of local units $A$} in case $A$ is a set of idempotents in $S$ having the property that every finite subset of $S$ is contained in a subring of the form $aSa$ for some $a\in A.$ 
%, for each finite subset $\{s_1,..., s_n\}$ of $S$, there exists $a\in A$ for which $as_ia=s_i$ for all $1\leq i\leq n$. In particular, we get
%\[as_i=a(as_ia)=a^2s_ia=as_ia=s_i\quad\text{and}\]
%\[ s_ia=(as_ia)a=as_ia^2=as_ia=s_i,\]
%justifying the name of such a property. 
The ring $S$ is said to have \emph{enough idempotents} in case there exists a set of nonzero orthogonal idempotents $H$ in $S$ for which the set $\Sig(H)$ of finite sums of distinct elements of $H$ is a set of local units for $S$. 

For a ring $S$ with local units, an abelian group $M$ is a left $S$-{\it module} if there is a (standard) left module action of $S$ on $M$, but with the added proviso that $M$ be {\it unitary}, i.e., that $SM=M$. (This is the appropriate generalization of the requirement that $1_S\cdot m=m$ for all $m$ in a left module $M$ over a unital ring $S$.) In particular, any left ideal of $S$ is a left $S$-submodule of the regular module ${}_SS$. If the condition $SM=M$ is missing, we say that $M$ is a left $S$-{\it premodule}.

For a ring $S$ with local units, the category $S$-Mod of (unitary) left $S$-modules and standard module homomorphisms retains many of the properties of a category of modules over a unital ring; importantly for us, that $S$-Mod has enough injectives, and that $S$-Mod admits direct limits.   

In this short section we present some results for rings with local units that will be very useful in the sequel. In particular,  we reformulate and prove a useful version of the Baer criterion \cite[Proposition 1.2]{AMT24} for rings with local units.

\begin{proposition}\label{prop:Baire}(\cite[Proposition 1.2]{AMT24}). Let $M$ be a left $S$-module and $I$ be a fixed left ideal of $S$. Assume that  any homomorphism $f : X \to M$ from any left
ideal $X\leq I$ extends to a homomorphism $\hat f:I\to M$, and also assume that any homomorphism $g:Y\to M$ from any left ideal $Y\geq I$ extends to a homomorphism $g:S\to M$. Then M is injective.
\end{proposition}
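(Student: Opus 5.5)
The plan is to reduce the two hypotheses to the classical Baer criterion: first show that every homomorphism from an \emph{arbitrary} left ideal of $S$ into $M$ extends to $S$, and then prove that the usual Baer criterion holds for unitary modules over a ring with local units.

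\emph{Step 1 (gluing).} Let $L\le S$ be any left ideal and $f:L\to M$ a homomorphism. Put $X=L\cap I\le I$. By the first hypothesis, $f|_X$ extends to some $\hat f:I\to M$. On $Y=L+I\ge I$ define
$$g(l+i)=f(l)+\hat f(i).$$
This is well defined. Indeed, if $l+i=l'+i'$, then $l-l'=i'-i\in X$, so $f(l-l')=\hat f(i'-i)$, because $\hat f$ agrees with $f$ on $X$. The map $g$ is clearly $S$-linear and restricts to $f$ on $L$. By the second hypothesis, $g$ extends to $S\to M$, and therefore so does $f$.

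\emph{Step 2 (Baer for local units).} Let $N\le P$ be unitary left $S$-modules and $\varphi:N\to M$ a homomorphism. Consider pairs $(N',\varphi')$ with $N\le N'\le P$ and $\varphi'$ extending $\varphi$, ordered by extension. Zorn's lemma gives a maximal pair $(N',\varphi')$. Suppose $p\in P\setminus N'$. Since $P$ is unitary, $p=\sum s_jp_j$, and choosing a local unit $a\in A$ with $as_j=s_j$ for all $j$ gives $ap=p$, so $p\in Sp$.

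Let $L=\{s\in S: sp\in N'\}$, a left ideal, and $h:L\to M$, $h(s)=\varphi'(sp)$. By Step 1, $h$ extends to $\tilde h:S\to M$. Define $\psi$ on $N'+Sp$ by
$$\psi(n+sp)=\varphi'(n)+\tilde h(s).$$
This is well defined: if $n+sp=0$ then $s\in L$, so $\tilde h(s)=h(s)=\varphi'(sp)=-\varphi'(n)$.

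Since $p=ap\in N'+Sp$, the pair $(N'+Sp,\psi)$ strictly extends $(N',\varphi')$. This contradicts maximality, so $N'=P$ and $M$ is injective.

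\emph{Main obstacle.} The only nonroutine point is the unitary issue in Step 2. Without the identity $p=ap$ we would merely have $Sp\subseteq P$, and possibly $p\notin N'+Sp$, so no strict enlargement would result. The local units are exactly what repair this: pick $a\in A$ absorbing the finitely many coefficients that express $p$.

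One should also check that the Zorn union of a chain of extensions consists of unitary submodules. This is automatic, since submodules of a unitary module over a ring with local units are unitary by the same $ap=p$ argument.
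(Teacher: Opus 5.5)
Your proof is correct and follows the paper's argument. Step 1 is exactly the paper's gluing of $f$ on $L$ with $\hat f$ on $I$ into a map on $L+I$. The only difference is that the paper simply cites the Baer criterion for rings with local units, whereas you reprove it in Step 2 via Zorn's lemma, and you correctly use local units to get $p=ap\in Sp$.
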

\begin{proof}
Let $J$ be any left ideal of $S$, and let $h:J\to M$ be an $S$-module homomorphism. The restriction $h_0:J\cap I\to M$ extends by assumption to $\hat h_0:I\to M$. Setting  
\[h_1(j+i)=h(j)+\hat h_0(i)\]
for each $j\in J, i\in I$,  yields  a homomorphism $h_1:J+I\to M$. That $h_1$ is well-defined follows from this observation:  $j+i=j'+i'$ implies $j-j'=i'-i\in J\cap I$, and hence 
%\small
\begin{align*}
    h(j)-h(j')&=h(j-j')=h_0(j-j')=h_0(i'-i)\\
    &=\hat h_0(i'-i)=\hat h_0(i')-\hat h_0(i),
\end{align*}
%\normalsize
from which $h(j)+\hat h_0(i)=h(j')+\hat h_0(i')$. Clearly $h_1$ extends $h$. Now, $h_1:J+I\to M$ extends by assumption to $\hat h_1:S\to M$. By the Baer Criterion for rings with local units \cite[Proposition~4.3.3]{AAM} we conclude that $M$ is injective.
\end{proof}

\begin{lemma}\label{lemma:restriction}
Let $M$ be a left $S$-module, $I\leq J$ fixed proper left ideals of $S$. Assume that the restriction map $\Hom_S(J,M)\to\Hom_S(I,M)$ is a monomorphism. If any homomorphism $I\to M$ extends to a homomorphism $S\to M$, then any homomorphism $J\to M$ also extends to a homomorphism $S\to M$.
\end{lemma}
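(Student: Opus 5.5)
Take $g:J\to M$ and write $f:=g|_I:I\to M$. By hypothesis $f$ extends to some $\hat f:S\to M$. The candidate extension of $g$ is this same $\hat f$, so it suffices to show that $\hat f|_J=g$.

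For this we use the injectivity of the restriction map $\rho:\Hom_S(J,M)\to\Hom_S(I,M)$. Both $g$ and $\hat f|_J$ are $S$-homomorphisms $J\to M$; $\hat f|_J$ is a homomorphism because it is the restriction of the homomorphism $\hat f$ to the submodule $J$. Their restrictions to $I$ agree, since
\[\rho(\hat f|_J)=\hat f|_I=f=g|_I=\rho(g).\]
Because $\rho$ is a monomorphism, $\hat f|_J=g$. Hence $\hat f:S\to M$ extends $g$, which is the claim.

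No step is a genuine obstacle. The only points to check are that the restriction of $\hat f$ to $J$ is an $S$-module homomorphism, and that $I\le J$, so restricting to $J$ and then to $I$ is the same as restricting directly to $I$. The properness of $I$ and $J$ plays no role beyond what is stated.
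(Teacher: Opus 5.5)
Your proof is correct and matches the paper's argument: restrict to $I$, extend to $S$ by hypothesis, and use injectivity of the restriction map to conclude that the extension agrees with the original map on all of $J$. You spell out the equality $\rho(\hat f|_J)=\rho(g)$ a bit more explicitly than the paper does, but the reasoning is the same.
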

\begin{proof}
Let $\varphi\in\Hom_S(J,M)$. By hypothesis, the restriction $\varphi_{\mid I}\in\Hom_S(I,M)$ extends to a homomorphism $\theta:S\to M$. Since the restrictions to $I$ of $\theta$ and $\varphi$ coincide, we have $\theta_{\mid J}=\varphi$ and hence $\theta$ extends $\varphi$.
\end{proof}

\begin{remark}\label{propertiespass} Let $S$ be a ring with local units  and $I$ a two-sided ideal of $S$. Then $S/I$ is a ring with local units.   Easily,  any left $S/I$-module $M$ becomes a left $S$-module through the projection $ S\to S/I$, by setting $s*x = (s+I)x$ for all $x\in M, s\in S.$\\
$\bullet$  \ It is straightforward to show that if $M$ is a simple left $S/I$-module, then $M$ is a simple left $S$-module.  \\
$\bullet$ \ It is similarly straightforward to show that if $L$ is an essential left $S/I$-submodule of $M$, then $L$ is also essential in $M$ as a left $S$-submodule.\\
$\bullet$ \  If $E$ is an injective left $S/I$-module, in general it is not guaranteed that $E$ is injective as left $S$-module (for instance $\mathbb Z/2\mathbb Z$  is an injective $\mathbb Z/2\mathbb Z$-module, but it is not an injective $\mathbb Z$-module). 
\end{remark}

With the final bullet point of the previous Remark as contrast, nevertheless we have
\begin{lemma}\label{lemma:injectivelocalunits}
Suppose $I$ is a two-sided ideal of the ring with local units $S$. Suppose $I$, viewed as a ring,  contains a set of local units. Then any injective left $S/I$-module is an injective left $S$-module.  

Consequently, in this situation, if $E$ is the injective envelope of the simple left $S/I$-module $M$ in $S/I$-Mod, then $E$ is the injective envelope of the simple left $S$-module $M$ in $S$-Mod.  
\end{lemma}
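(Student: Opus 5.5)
We will verify injectivity over $S$ with the Baer criterion for rings with local units \cite[Proposition~4.3.3]{AAM}. Let $Q$ be an injective left $S/I$-module, regarded as a left $S$-module via $s*x=(s+I)x$ as in Remark~\ref{propertiespass}. Let $J$ be a left ideal of $S$ and $h:J\to Q$ an $S$-homomorphism. We must extend $h$ to all of $S$.

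The key observation is that $I$ annihilates $Q$: for $i\in I$ and $x\in Q$ we have $i*x=(i+I)x=0$. We first show that $h$ vanishes on $J\cap I$, using that $I$ has local units. Take $j\in J\cap I$ and choose an idempotent $a\in I$ with $aja=j$, so $aj=j$. Then
\[h(j)=h(aj)=a*h(j)=0,\]
since $a\in I$. It follows that $h$ kills $IJ\subseteq J\cap I$. Hence $h$ factors through $J/(J\cap I)\cong (J+I)/I$, which is a left ideal $\bar J$ of $S/I$. The induced map $\bar h:\bar J\to Q$, given by $\bar h(j+I)=h(j)$, is well defined and $S/I$-linear.

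Next we use injectivity of $Q$ over $S/I$. By the Baer criterion over the ring with local units $S/I$, $\bar h$ extends to an $S/I$-homomorphism $\tilde h:S/I\to Q$. Composing with the projection $S\to S/I$ gives an $S$-homomorphism $S\to Q$ that restricts to $h$ on $J$. By the Baer criterion, $Q$ is injective over $S$. The one point requiring care is whether the Baer criterion in \cite{AAM} is stated for unitary modules. It is: $Q$ is unitary over $S$ because $S/I\cdot Q=Q$ lifts to $SQ=Q$.

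For the consequence, let $E$ be the injective envelope of $M$ in $S/I$-Mod. By the first part, $E$ is injective in $S$-Mod. By Remark~\ref{propertiespass}, $M$ is simple as an $S$-module and is essential in $E$ as an $S$-submodule. An injective essential extension is an injective envelope, so $E$ is the injective envelope of $M$ in $S$-Mod.

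The main obstacle is the vanishing of $h$ on $J\cap I$. This is exactly where the local units of $I$ are needed; the $\mathbb{Z}/2\mathbb{Z}$ example in Remark~\ref{propertiespass} fails precisely at this step.
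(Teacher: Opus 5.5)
Your proof is correct and follows the paper's argument. The paper likewise uses a local unit of $I$ to show that any $S$-homomorphism $J\to Q$ vanishes on $J\cap I$, and then applies the Baer criterion ``in the usual way'' (your factorization through $(J+I)/I$ and extension over $S/I$ is exactly that step spelled out), deducing the consequence from Remark~\ref{propertiespass} as you do.
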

\begin{proof}
Let $E$ be an injective left $S/I$-module.    
Let $J$ be any left ideal of $S$, and $\varphi:J\to E$ an $S$-homomorphism.  Let $i\in I\cap J$; By the hypothesis of local units, we can find $i_0 \in I$ for which $i_0 i = i$.  Then $\varphi(i) = \varphi(i_0 i) = i_0 \varphi(i) = 0$ by the action of $S$ on $E$. Using this observation, the Baer criterion can be applied in the usual way to the left $S$-module $E$.

The second statement then follows directly from Remark \ref{propertiespass}. 
 %The Baer criterion can be applied in the usual way to the left $S$-module $E$.
%Using this observation,  setting $\overline\varphi(j+i)=\varphi(j)$ for each $j\in J$, $i\in I$, gives a well defined left $S$-module homomorphism$j_1+i_1=j_2+i_2$, then $j_1-j_2=i_2-i_1$, as follows:  if 
\end{proof}

\smallskip

We conclude this short section by making some observations about the notion of a vector space of formal series.  %\label{subsection:formalseries}

Let $K$ be a field. For each set $Y$, we denote by $K^Y$ the $K$-vector space of all functions $Y\to K$. Any element $\mathfrak p$ in $K^Y$ can be represented by a \emph{formal series in $Y$}, i.e., a (possibly infinite) sum
\[\sum_{y\in Y}k_yy.\]
The latter can be identified with the function $Y\to K$ that maps $\hat y$ to $k_{\hat y}$ for each $\hat y\in Y$; i.e., 
\[
\left(\sum_{y\in Y}k_yy\right)(\hat y)=k_{\hat y}\quad \forall \hat y\in Y.
\]
The set of formal series in $Y$ is denoted by $K[[Y]]$.
The role of variables $y\in Y$ is that of a ``placeholder" to identify the images of the elements of $Y$. The vector space structure of $K^Y$ is reflected in the usual operations of sum and scalar product  for the elements of $K[[Y]]$.
Having fixed any set $X$ and any injective map $f:Y\to X$, 
we can denote also by
\[
\sum_{y\in Y}k_yf(y)
\]
the function $Y\to K$ that maps $\hat y$ to $k_{\hat y}$ for each $\hat y\in Y$, where we use, instead of $y$, the expression $f(y)$ as placeholder.  More formally,
\[
\left(\sum_{y\in Y}k_yf(y)\right)(\hat y)=k_{\hat y}\quad \forall \hat y\in Y.
\]
%using the formal writing $yf(y)$ obtained juxtaposing $y$ and $f(y)$ as placeholder for $k_y$.
This seemingly-overly-cumbersome notation will serve us well in the sequel.

\section[Some general Leavitt path algebras results]{Some general results about \\ Leavitt path algebras  over arbitrary graphs }\label{Lparesultssection}

We assume the reader is familiar with the notion of a {\it Leavitt path algebra};  see \cite{AAM}  for additional details and background information.  We start this section by setting some notation.  

We let $E=(E^0, E^1, s,r)$ denote an arbitrary directed graph, $K$ any field,  and $L_K(E)$  the associated Leavitt path $K$-algebra.  $L_K(E)$ is the quotient of the path $K$-algebra $K\widehat E$ on the \emph{extended graph} $\widehat E$ by the ideal generated by the Cuntz-Krieger relations (see \cite[Definition 1.2.3]{AAM}). The algebra $L_K(E)$ always has enough idempotents: the vertices in $E^0$ are nonzero orthogonal idempotents, and the set of finite sums of distinct elements of $E^0$ constitutes a set of local units for $L_K(E)$. $L_K(E)$ has multiplicative identity if and only if the set $E^0$ of vertices is finite, in which case $1_{L_K(E)} = \sum_{v\in E^0}v.$

We denote by $\Path(E)$ the set of all paths in $E$, i.e., the finite sequences of edges $e_1\cdots e_m$ such that $r(e_i)=s(e_{i+1})$, $1\leq i<m$, together with the vertex set $E^0$.  
%Each vertex $v\in E^0$ is viewed as an element of $\Path(E)$ of length $0$.
We define a preorder $\geq$ on $E^0$ given by:
\[v\geq w\ \ \text{ in case there is }\mu\in\text{Path}(E) \ \mbox{for which} \  s(\mu)=v, r(\mu)=w.\]
If $H_0\subseteq E^0$ then \emph{the tree $T(H_0)$} is the set
\[T(H_0):=\{w\in E^0\mid \exists v\in H_0\text{ such that }v\geq w\}.\]
A vertex $v \in E^0$ is 
\begin{itemize}
    \item \emph{regular} if the set $s^{-1}(v)$ of edges with source $v$ has a finite cardinality greater than or equal to 1. 
\item  an \emph{infinite emitter} if $s^{-1}(v)$ is infinite. 
%$\text{Inf}(H)$ denotes the set of infinite emitters belonging to $H\subseteq E^0$.
\item  a \emph{sink} if $s^{-1}(v)=\emptyset$.
\end{itemize}
We denote the set of regular vertices (resp., infinite emitters) of $E$  by ${\rm Reg}(E)$ (resp., ${\rm Inf}(E)$).  
A set of vertices $H\subseteq E^0$ is 
\begin{itemize}
    \item \emph{hereditary} if whenever $v\in H$ and $w\in E^0$ for which $v\geq w$, then $w\in H$.
    \item \emph{saturated} if whenever a regular vertex $v$ has the property that $\{r(e)\mid e\in E^1, s(e)=v\}\subseteq H$, then $v\in H$.
\end{itemize}
It is easy to check that $T(H_0)$ is the hereditary closure of $H_0$.

A cycle $c=e_1\cdots e_n$ in $E$ is a closed path (i.e., $r(e_n)=s(e_1)$) such that $s(e_i)\not=s(e_j)$ for every $i\not=j$. We denote by $c^0$ the set of vertices belonging to the cycle $c$.
If $q(x)=k_0+k_1x+\cdots k_mx^m\in K[x]$, and $c$ is a cycle with source $s(c)$, we define
\[q(c):=k_0s(c)+k_1c+\cdots k_mc^m\in L_K(E).\]

\begin{lemma}\label{lemma:I_u}
The two-sided ideal $I(H_0)$ of $L_K(E)$ generated by any subset $H_0\subseteq E^0$ coincides with the two-sided ideal of $L_K(E)$ generated by its tree $T(H_0)$.
\end{lemma}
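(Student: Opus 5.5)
The plan is to prove the two inclusions $I(H_0)\subseteq I(T(H_0))$ and $I(T(H_0))\subseteq I(H_0)$ separately, where $I(X)$ denotes the two-sided ideal of $L_K(E)$ generated by a set of vertices $X$.

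The first inclusion is immediate. By definition of the tree (take $\mu=v$ of length zero), every $v\in H_0$ satisfies $v\geq v$, so $H_0\subseteq T(H_0)$. Hence the ideal generated by $H_0$ lies inside the ideal generated by $T(H_0)$.

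For the reverse inclusion it suffices to show that each $w\in T(H_0)$ lies in $I(H_0)$. Fix such a $w$ and choose $v\in H_0$ together with a path $\mu\in\Path(E)$ satisfying $s(\mu)=v$ and $r(\mu)=w$. If $\mu$ has length zero, then $w=v\in H_0$ and there is nothing to prove. Otherwise write $\mu=e_1\cdots e_m$ and set $\mu^*=e_m^*\cdots e_1^*$ in $L_K(E)$. Repeated use of the Cuntz--Krieger relation $e^*e=r(e)$ for $e\in E^1$ gives
\[\mu^*\mu=r(\mu)=w.\]
Since $\mu=s(\mu)\mu=v\mu$, we obtain
\[w=\mu^* v\mu\in L_K(E)\,v\,L_K(E)\subseteq I(H_0).\]
Thus $T(H_0)\subseteq I(H_0)$. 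Because $I(H_0)$ is a two-sided ideal, it contains the ideal generated by $T(H_0)$, which is the second inclusion.

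None of these steps is a real obstacle. The only point needing care is the identity $\mu^*\mu=r(\mu)$, which follows by induction on the length $m$: the case $m=1$ is the relation $e_1^*e_1=r(e_1)$, and the inductive step uses $e_i^*e_i=r(e_i)=s(e_{i+1})$ together with $s(e_{i+1})e_{i+1}=e_{i+1}$.
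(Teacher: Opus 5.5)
Your proof is correct and follows the paper's argument: the paper also writes each $u\in T(H_0)$ as $u=\mu^* s(\mu)\mu$ for a path $\mu$ from a vertex of $H_0$ to $u$, which gives $T(H_0)\subseteq I(H_0)$, and the reverse containment follows from $H_0\subseteq T(H_0)$. Your treatment of the length-zero case and your induction proving $\mu^*\mu=r(\mu)$ spell out details the paper leaves implicit.
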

\begin{proof}
If $u\in T(H_0)$, then there exists a path $\mu\in\Path(E)$ such that $s(\mu)\in H_0$ and $r(\mu)=u$. Since $u=\mu^* s(\mu)\mu$ we have $u\in I(H_0)$. Thus $T(H_0)\subseteq I(H_0)$, and the result follows immediately. \end{proof}

\begin{definition}(compare with \cite[Definition~2.5.16]{AAM})\label{def:estesa}
Let $H$ be a nonempty hereditary subset of $E^0$. We denote by $F^+_E(H)$ the set
\[
\begin{aligned}
F^+_E(H)  \ :=  \  H \ \sqcup \ & \{\alpha\in \text{Path}(E)\mid  \ \alpha=e_1\cdots e_n,\text{ with }s(e_1)\in E^0\setminus H,\\
&r(e_i)\in E^0\setminus H\ \forall 1\leq i<n, \ \mbox{and} \  r(e_n)\in H\}.
\end{aligned}
\]
(So, in the notation of \cite[Definition~2.5.16]{AAM}, $F_E^+(H) := H \sqcup F_E(H).$)
\end{definition}

\begin{lemma}\label{lemma:mu*lambda}
Let $H$ be a nonempty hereditary subset of $E^0$.  
Then in $L_K(E)$,  for each $\mu, \lambda\in F^+_E(H)$ we have
\[\mu^*\lambda=\begin{cases}
    r(\mu)  & \text{if }\mu=\lambda, \\
      0& \text{otherwise}.
\end{cases}\]
\end{lemma}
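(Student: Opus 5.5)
We use the standard computation of $\mu^*\lambda$ for paths $\mu,\lambda$ in a Leavitt path algebra, which follows from the (CK1) relation $e^*f=\delta_{e,f}r(e)$ for edges and the idempotence of vertices. Concretely, $\mu^*\lambda$ equals $\lambda'$ if $\lambda=\mu\lambda'$, equals $(\mu')^*$ if $\mu=\lambda\mu'$, and is $0$ otherwise (see \cite{AAM}). Given this, the claim reduces to a combinatorial fact about $F^+_E(H)$: \emph{no element of $F^+_E(H)$ is a proper initial segment of another one.} We also have to treat the cases in which one or both of $\mu,\lambda$ are vertices of $H$.

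We split into cases.

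\emph{Both are vertices.} If $\mu=v$ and $\lambda=w$ lie in $H$, then $\mu^*\lambda=vw=\delta_{v,w}v$, which is the claim since $r(v)=v$.

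\emph{One is a vertex.} Suppose $\mu=v\in H$ and $\lambda=e_1\cdots e_n$ has positive length. Then $v^*\lambda=v\lambda=\delta_{v,s(e_1)}\lambda$. By definition $s(e_1)\in E^0\setminus H$, so $s(e_1)\neq v$ and the product is $0$. The symmetric case $\mu=e_1\cdots e_n$, $\lambda=v$ gives $\mu^*v=(v\mu)^*=0$ in the same way.

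\emph{Both have positive length.} Write $\mu=e_1\cdots e_m$ and $\lambda=f_1\cdots f_n$, and by symmetry (taking adjoints) assume $m\le n$. Compute $\mu^*\lambda=e_m^*\cdots e_1^*f_1\cdots f_n$ from the inside out using (CK1). If $e_i\neq f_i$ for some first index $i\le m$, the product is $0$. Otherwise $e_i=f_i$ for all $i\le m$, so $\mu$ is an initial segment of $\lambda$, and
\[
\mu^*\lambda=r(e_m)f_{m+1}\cdots f_n .
\]
If $m=n$ then $\mu=\lambda$ and the product is $r(e_m)=r(\mu)$, as required. If $m<n$, then $r(e_m)=r(f_m)$. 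Since $m<n$, the definition of $F^+_E(H)$ applied to $\lambda$ forces $r(f_m)\in E^0\setminus H$. Applied to $\mu$, it forces $r(e_m)\in H$. This is a contradiction, so the case $m<n$ with matching prefix cannot occur, and the product is $0$ whenever $\mu\ne\lambda$.

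The only real content is this prefix-freeness argument in the last case. It uses exactly the defining conditions of $F^+_E(H)$: the last range lies in $H$ and all earlier ranges lie outside $H$. Hereditariness of $H$ is not even needed here. Everything else is the routine (CK1) bookkeeping.
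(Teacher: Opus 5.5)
Your proof is correct and follows essentially the same route as the paper: the standard formula for $\mu^*\lambda$ (the paper cites \cite[Lemma~1.2.12]{AAM}) reduces the claim to the fact that no element of $F^+_E(H)$ is a proper prefix of another, and that prefix-freeness is exactly the contradiction the paper derives. Your separate treatment of the cases where $\mu$ or $\lambda$ is a vertex of $H$, using $s(e_1)\notin H$, covers a case that the paper's one-line ``ranges of two edges in $H$'' remark does not literally address. Your observation that hereditariness is not needed is also correct.
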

\begin{proof}
By \cite[Lemma~1.2.12]{AAM} $\mu^*\lambda\not=0$ implies $\mu=\lambda\kappa$ for some $\kappa\in \text{Path}(E)$ or $\lambda=\mu\sigma$ for some $\sigma\in \text{Path}(E)$. If $\mu=\lambda\kappa$ and $\kappa\not=r(\lambda)$, then the ranges of two edges in $\mu$ would be in $H$, contrary to the property of the paths in $F^+_E(H)$ (see \Cref{def:estesa}). Analogously if $\lambda=\mu\sigma$ and $\sigma\not=r(\mu)$.
\end{proof}

The following result generalizes to an arbitrary hereditary set of vertices in an arbitrary Leavitt path algebra the property we observed  in \cite[Remark 5.1]{AMT21} in the case of the Leavitt path algebra of the Toeplitz graph. 
%he sink for the Jacobson algebra.
\begin{proposition}
\label{lemma:Rmu*}
The two-sided ideal $I(H)$ of $L_K(E)$ generated by a nonempty hereditary subset $H$ of $E^0$ is equal, as a left $L_K(E)$-ideal, to
\[ \sum_{\mu\in F^+_E(H)}L_K(E)\mu^* \ = \ \bigoplus_{\mu\in F^+_E(H)}L_K(E)\mu^*,\]
which in turn is isomorphic as left $L_K(E)$-modules to the external direct sum
\[ \bigoplus_{\mu\in F^+_E(H)}L_K(E)r(\mu).\]
\end{proposition}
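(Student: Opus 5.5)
We prove three things in turn: the sum $\sum_{\mu\in F^+_E(H)}L_K(E)\mu^*$ is contained in $I(H)$; it contains $I(H)$; and it is direct, with the stated isomorphism. Throughout we use \Cref{lemma:mu*lambda}.

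\emph{The sum lies in $I(H)$.} For $\mu\in F^+_E(H)$ we have $r(\mu)\in H$ and $\mu^*=r(\mu)\mu^*\in I(H)$. Since $I(H)$ is a left ideal, $\sum_\mu L_K(E)\mu^*\subseteq I(H)$.

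\emph{$I(H)$ lies in the sum.} The sum is a left ideal, so it suffices to show it is also a right ideal containing $H$. It contains each $v\in H$, since $v\in F^+_E(H)$ and $v=v^*$. For the right ideal property we show that $\mu^*x$ lies in the sum for every generator $x\in\{v,e,e^*\}$ of $L_K(E)$.
\begin{itemize}
\item If $x=v$ or $x=e^*$, then $\mu^*x$ is either $0$ or of the form $(e\mu)^*$ with $\mu\in F^+_E(H)$. It therefore suffices to show that $\nu^*\in\sum_\mu L_K(E)\mu^*$ for every path $\nu$ with $r(\nu)\in H$.
\item To see this, write $\nu=\nu_1\nu_2$, where $\nu_1$ is the initial segment up to the first vertex lying in $H$. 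If $s(\nu)\in H$, take $\nu_1=s(\nu)$. Then $\nu_1\in F^+_E(H)$, and $\nu^*=\nu_2^*\nu_1^*\in L_K(E)\nu_1^*$.
\item If $x=e$, then $\mu^*e$ is either $0$, or $\mu'^*$ when $\mu=e\mu'$, or $e'$-type terms. More precisely: if $\mu$ is a vertex $v\in H$, then $ve=e$ with $s(e)=v\in H$. In that case $r(e)\in H$ by hereditarity, and $e=e\,r(e)=e\,r(e)^*\in L_K(E)r(e)^*$.
\item If $\mu$ has positive length, then $\mu^*e$ is nonzero only when $\mu=e\mu'$, and then $\mu^*e=\mu'^*r(e)$. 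Here $\mu'$ is a path with range in $H$, so $\mu'^*$ lies in the sum by the previous step.
\end{itemize}
Hence the sum is a two-sided ideal containing $H$, so it contains $I(H)$. Combining with the first step gives equality.

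\emph{Directness and the isomorphism.} Suppose $\sum_{i}a_i\mu_i^*=0$, where the $\mu_i\in F^+_E(H)$ are distinct and finitely many. Right-multiplying by $\mu_j$ and applying \Cref{lemma:mu*lambda} gives $a_j r(\mu_j)=0$, hence $a_j\mu_j^*=a_jr(\mu_j)\mu_j^*=0$. So the sum is direct.

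For the isomorphism, define $L_K(E)r(\mu)\to L_K(E)\mu^*$ by $a\mapsto a\mu^*$, and in the other direction $b\mapsto b\mu$. Both maps are left module homomorphisms. They are mutually inverse because $\mu^*\mu=r(\mu)$ and $\mu^*=r(\mu)\mu^*$, so that $a\mu^*\mu=a\,r(\mu)=a$ for $a\in L_K(E)r(\mu)$, and $b\mu\mu^*=b$ for $b=c\mu^*\in L_K(E)\mu^*$. Taking the direct sum of these isomorphisms gives the stated isomorphism.

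The main obstacle is the bookkeeping in the right-ideal step: every $\mu^*$ times a generator must land back in the sum. This is handled by the reduction to arbitrary paths $\nu$ with $r(\nu)\in H$, which are factored at their first entry into $H$.
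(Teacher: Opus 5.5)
Your proof is correct. The containment $\sum_{\mu}L_K(E)\mu^*\subseteq I(H)$, the directness argument (right-multiplying by $\mu_j$ and invoking Lemma~\ref{lemma:mu*lambda}), and the mutually inverse maps $a\mapsto a\mu^*$, $b\mapsto b\mu$ are exactly what the paper does.

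You differ from the paper only in the reverse containment $I(H)\subseteq\sum_{\mu}L_K(E)\mu^*$. The paper quotes \cite[Lemma~2.4.1]{AAM} to write every element of $I(H)$ as $\sum k_i\gamma_i\lambda_i^*$ with $r(\gamma_i)=r(\lambda_i)\in H$. It then factors each $\lambda_i=\hat\lambda_i\lambda_i'$ at its first entry into $H$, so that $\gamma_i\lambda_i^*=\gamma_i\lambda_i'^*\hat\lambda_i^*\in L_K(E)\hat\lambda_i^*$. You instead show directly that the sum is closed under right multiplication, by checking $\mu^*x$ for the generators $x=u,e,e^*$ ($u\in E^0$, $e\in E^1$). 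The same first-entry factorization handles the ghost paths $\nu^*$ with $r(\nu)\in H$, and hereditariness is needed only in the case $\mu\in H$, $x=e$.

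Your route is self-contained: it essentially reproves the half of the cited lemma that is actually needed, relying only on $f^*e=\delta_{e,f}r(e)$ and hereditariness of $H$. The paper's route is shorter and gives an explicit expression for each element of $I(H)$.

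A few cosmetic points should be cleaned up:
\begin{itemize}
\item For $x$ a vertex, $\mu^*x$ is $0$ or $\mu^*$ itself, not of the form $(e\mu)^*$. This is harmless, since your reduction to arbitrary paths $\nu$ with $r(\nu)\in H$ covers it.
\item The stray phrase ``$e'$-type terms'' should be removed.
\item You reuse the letter $v$ both for a generic vertex generator and for an element of $H$.
\end{itemize}
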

\begin{proof}
Since $s(\mu^*)\in H$ for each $\mu\in F^+_E(H)$, it is clear that 
\[\sum_{\mu\in F^+_E(H)}L_K(E)\mu^*\leq I(H).\] By \cite[Lemma~2.4.1]{AAM}, any element of $I(H)$ has the form
\[\sum_{i=1}^nk_i\gamma_i\lambda_i^*\]
where $n\geq 1$, $k_i\in K$, $\gamma_i,\lambda_i\in\Path(E)$  such that $r(\gamma_i)=r(\lambda_i)\in H$.
If $\lambda_i$ has length $\geq 1$, set $\lambda_i=\ell_{i,1}\cdots \ell_{i,m_i}$ with $\ell_{i,j}\in E^1$ for each $1\leq j\leq m_i$, $1\leq i\leq n$. 
We have $r(\lambda_i)=r(\ell_{i,m_i})\in H$. 
If $s(\ell_{i,m_i})$ is not in $H$, then $s(\ell_{i,j})\notin H$ for each 
$1\leq j\leq m_i$ and hence $\lambda_i$ belongs to $F^+_E(H)$. 
Otherwise, if $s(\ell_{i,m_i})\in H$, let
\[\jj_i=\min\{j\mid 1\leq j\leq m_i, s(\ell_{i,j})\in H\}.\]
Consider the path
\[\hat\lambda_i=\begin{cases}
      \lambda_i  & \text{if }\lambda_i\text{ is a vertex,}\\
      \lambda_i& \text{if }s(\ell_{i,m_i})\notin H, \\
      s(\ell_{i,1})& \text{if }\jj_i=1, \\
     \ell_{i,1}\cdots \ell_{i,\jj_i-1} & \text{otherwise};
\end{cases}
\]
 and write $\lambda_i=\hat\lambda_i\lambda'_i$ for suitable paths 
$\lambda'_i$, for $1\leq i\leq n$. Then $\hat\lambda_i\in F^+_E(H)$ for $1\leq i\leq n$ and hence
\[\sum_{i=1}^nk_i\gamma_i\lambda_i^*=\sum_{i=1}^nk_i\gamma_i{\lambda'}_i^*\hat\lambda_i^*\in \sum_{\mu\in F^+_E(H)}L_K(E)\mu^*.\]
This establishes the first part of the Proposition.  
 
To show that the displayed sum of left ideals is direct, assume that $\sum_{i=1}^n r_i\mu_i^*=0$ with $r_i\in L_K(E)$ and $\mu_i\in F^+_E(H)$, $i=1,...,n$. Then by Lemma~\ref{lemma:mu*lambda} for each $i_0\in\{1,...,n\}$ we have
\[0=\left(\sum_{i=1}^n r_i\mu_i^*\right)\mu_{i_0}=r_{i_0}r(\mu_{i_0}),\]
and hence $r_{i_0}\mu_{i_0}^*=0$ as desired.  

For the final statement, we note that $L_K(E)\mu^*\cong L_K(E)r(\mu)$, as it is easy to check that 
\[L_K(E)\mu^*\to L_K(E)r(\mu) \ \ \mbox{via} \ \ \ell\mu^*\mapsto \ell\mu^*\cdot \mu=\ell r(\mu)\quad\text{and}
\]
\[ L_K(E)r(\mu)\to L_K(E)\mu^* \ \ \mbox{via} \ \  \ell r(\mu)\mapsto \ell r(\mu)\cdot\mu^*=
\ell\mu^*\]
are inverse isomorphisms.
\end{proof}

%\section{Minimal left ideals in a Leavitt path algebra, \\ and their relation to line points}\label{section:simpleleft}

We now give the formal definition of the vertices of a graph $E$ that will play the central role in this article.

\begin{definition} A vertex $v$ in the graph $E$ is called a \emph{line point} if there exist no bifurcations or cycles at any vertex of $T(v)$.
\end{definition}

By \cite[Proposition 2.6.11]{AAM}, for any graph $E$,  $v\in E^0$ has the property that $L_K(E)v$ is a  simple left ideal if and only if $v$ is a line point.

Vacuously, any sink is a line point.  In the graph
\[A_n \ \ \ := \ \ \  \underbrace{\bullet \to \bullet \to \cdots \to \bullet}_n 
\] 
all the vertices are line points; the same is true as well for all the vertices in  the graph
$$A_\infty \ \ \ := \ \ \  \bullet \to \bullet \to \bullet \to \cdots $$

\begin{definition}
     Let $v$ be a line point in $E$, and $w,w' \in T(v)$.   Then either $w\geq w'$ and there is a unique path $p$ in $E$ for which $s(p) = w$ and $r(p) = w'$, or otherwise there are no paths in $E$ for which $s(p) = w$ and $r(p) = w'$.  In the first situation, 
we denote the unique path from  $w$ to  $w'$ by  $$ p_{w,w'}.$$ 
\end{definition}

The following three lemmas will play a key role in \Cref{section:injenvline}.

\begin{lemma}\label{computewithpsubw}
Suppose $v$ is a line point, and $w,w',w'' \in T(v)$ for which $w\geq w'$ and $w' \geq w''$.   Then in $L_K(E)$, 
\begin{enumerate}
\item   $p_{w,w'} p_{w',w''} = p_{w,w''}$.
\item    $p_{w,w'} p_{w,w'}^* = w$.   
\end{enumerate}
\end{lemma}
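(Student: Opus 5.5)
Part (1) follows from the uniqueness of paths inside $T(v)$. Since $w\geq w'$ and $w'\geq w''$, the concatenation $p_{w,w'}p_{w',w''}$ is a path in $E$ from $w$ to $w''$; this uses $r(p_{w,w'})=w'=s(p_{w',w''})$, and if either factor is a vertex the product is just the other factor. Since $w\in T(v)$, the definition preceding the lemma tells us there is exactly one path from $w$ to $w''$, namely $p_{w,w''}$. Hence the concatenation equals $p_{w,w''}$ in $\Path(E)$, and so also in $L_K(E)$.

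For (2) I will induct on the length $n$ of $p:=p_{w,w'}$. If $n=0$, then $w=w'$ and $p=w$, so $pp^*=w\cdot w=w$.

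For $n\geq 1$, write $p=e\,q$ with $e\in E^1$, $s(e)=w$ and $q=p_{r(e),w'}$. Here $r(e)\in T(v)$ because $T(v)$ is hereditary, and $q$ is the unique path from $r(e)$ to $w'$. Then $pp^*=e\,qq^*\,e^*$, and by induction $qq^*=r(e)$, so $pp^*=e\,r(e)\,e^*=ee^*$.

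It remains to show $ee^*=w$. Since $w\in T(v)$ and $v$ is a line point, $w$ has no bifurcation. Also $w$ emits at least the edge $e$, so $s^{-1}(w)=\{e\}$. In particular $w$ is a regular vertex, so the (CK2) relation applies and gives $w=\sum_{f\in s^{-1}(w)}ff^*=ee^*$. This completes the induction.

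The argument is routine. The only points needing care are that each intermediate vertex on $p$ lies in $T(v)$ (hereditary closure), so it has exactly one outgoing edge, and that (CK2) applies only at regular vertices. Both are guaranteed because $s(e)=w$ emits $e$ and has no bifurcation.
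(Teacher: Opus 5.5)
Your proof is correct and takes the same approach as the paper: part (1) comes from the uniqueness of paths between vertices of $T(v)$, and part (2) from applying (CK2) at the bifurcation-free, hence single-edge regular, vertices along $p_{w,w'}$. The paper states this in one line, and your induction on the length of $p_{w,w'}$ is a careful write-up of that same argument.
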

\begin{proof}
The first statement is clear.  The second statement follows from the (CK2) relation, since no vertex in $T(v)$ has a bifurcation.   
\end{proof}

%Clearly $T(v)$ is an hereditary set. 

\begin{lemma}\label{lemma:semisimple}
The two-sided ideal $I(v)$ of $L_K(E)$ generated by the line point $v$ is a semisimple homogeneous  left $L_K(E)$-module.
\end{lemma}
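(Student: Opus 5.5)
We plan to obtain the statement directly from \Cref{lemma:Rmu*}, applied to the hereditary set $H:=T(v)$. By \Cref{lemma:I_u}, the two-sided ideal $I(v)$ generated by $v$ coincides with the ideal $I(T(v))$ generated by the tree $T(v)$, which is a nonempty hereditary subset of $E^0$. Then \Cref{lemma:Rmu*} gives an isomorphism of left $L_K(E)$-modules
\[
I(v)\;\cong\;\bigoplus_{\mu\in F^+_E(T(v))}L_K(E)r(\mu),
\]
where each $r(\mu)$ lies in $T(v)$. So it suffices to show two things: each summand $L_K(E)w$ with $w\in T(v)$ is simple, and all of these summands are isomorphic to one another, say to $L_K(E)v$.

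For simplicity, we first note that every $w\in T(v)$ is again a line point, because $T(w)\subseteq T(v)$ and so no vertex of $T(w)$ has a bifurcation or a cycle. By \cite[Proposition 2.6.11]{AAM}, $L_K(E)w$ is therefore a simple left ideal. This shows that $I(v)$ is a direct sum of simple modules, so it is semisimple.

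For homogeneity, take $w\in T(v)$, so that $v\geq w$ and the path $p:=p_{v,w}$ is defined. We will use right multiplication maps
\[
L_K(E)v\to L_K(E)w,\quad x\mapsto xp, \qquad\text{and}\qquad L_K(E)w\to L_K(E)v,\quad y\mapsto yp^*.
\]
Since $p^*p=r(p)=w$ by (CK1), and $pp^*=v$ by \Cref{computewithpsubw}(2), these maps are mutually inverse. Indeed, $xpp^*=xv=x$ for $x\in L_K(E)v$, and $yp^*p=yw=y$ for $y\in L_K(E)w$. Hence $L_K(E)w\cong L_K(E)v$ for every $w\in T(v)$, and consequently every summand in the decomposition above is isomorphic to $L_K(E)v$.

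The only point needing real care is the identity $pp^*=v$. It relies on every vertex along $p$ being regular with exactly one outgoing edge, so that (CK2) applies at each step. A line point could in principle have an infinite emitter in its tree, but an infinite emitter has a bifurcation, so this is excluded. Vertices along $p$ other than its range are not sinks, since $p$ leaves them. This is precisely the content of \Cref{computewithpsubw}(2), so beyond citing it the argument is routine.
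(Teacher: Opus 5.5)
Your proposal is correct and follows essentially the same route as the paper: you write $I(v)=I(T(v))\cong\bigoplus_{\mu\in F^+_E(T(v))}L_K(E)r(\mu)$ via Proposition~\ref{lemma:Rmu*}, then identify each summand with $L_K(E)v$ by right multiplication by $p_{v,r(\mu)}$ and $p^*_{v,r(\mu)}$. Your separate argument that each $w\in T(v)$ is a line point is valid but not needed, since simplicity of every summand already follows from its isomorphism with the simple module $L_K(E)v$.
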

\begin{proof}
%If $v$ is a line point and $w\in T(v)$ then $w=p_{v,w}^*vp_{v,w}\in I(v)$ and hence $I(v)=\langle T(v)\rangle$.
By Lemma~\ref{lemma:I_u}, and Proposition \ref{lemma:Rmu*} we have
\[I(v)=I(T(v))\cong \bigoplus_{\mu\in F^+_E(T(v))} L_K(E) r(\mu).\]
We conclude by observing that
\[L_K(E)v\to L_K(E)r(\mu) \ \ \  \mbox{via} \ \ \ \ell v\mapsto \ell vp_{v,r(\mu)}\quad\text{and}\]
\[L_K(E)r(\mu)\to L_K(E)v \ \ \ \mbox{via} \ \ \ \ell r(\mu)\mapsto \ell r(\mu) p_{v,r(\mu)}^*\]
are inverse isomorphisms. Hence $I(v)$ is isomorphic to the direct sum of copies of the simple left $L_K(E)$-module $L_K(E)v$.
\end{proof}

\begin{lemma}\label{lemma:leftidealLine}
The elements of the left ideal $L_K(E)v$ generated by the line point $v$ are of the form
\[\sum_{i=1}^nk_i\lambda_ip^*_{v,r(\lambda_i)} \ \ \ \ \ \mbox{where}   \ \lambda_i\in F^+_E(T(v)), k_i\in K.\]
\end{lemma}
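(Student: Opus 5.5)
We prove the two inclusions separately. For the easy inclusion, fix $\lambda\in F^+_E(T(v))$. Then $r(\lambda)\in T(v)$, so $v\geq r(\lambda)$ and the path $p_{v,r(\lambda)}$ exists. Its ghost $p^*_{v,r(\lambda)}$ has range $v$, so $\lambda p^*_{v,r(\lambda)} = \lambda p^*_{v,r(\lambda)}v \in L_K(E)v$. Taking $K$-linear combinations gives all the displayed sums.

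For the reverse inclusion we route through the two-sided ideal. Since $v\in I(v)$, we have $L_K(E)v\subseteq I(v)$. By Lemma~\ref{lemma:I_u}, $I(v)=I(T(v))$, and $T(v)$ is hereditary. By Proposition~\ref{lemma:Rmu*}, every $x\in L_K(E)v$ can therefore be written as
\[x=\sum_{i=1}^n r_i\lambda_i^*,\qquad \lambda_i\in F^+_E(T(v)),\ r_i\in L_K(E).\]
The $\lambda_i$ are distinct, and we may replace $r_i$ by $r_ir(\lambda_i)$.

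The idea is now to multiply on the right by $v$ and absorb the ghost. Write $w_i=r(\lambda_i)$; this is a vertex in $T(v)$. Since $x=xv$, we get $x=\sum_i r_i\lambda_i^*v$. To rewrite each summand, we use the fact (from the line point hypothesis) that elements of $w_iL_K(E)v$ are spanned by $\gamma\delta^*$ with $s(\gamma)=w_i$, $r(\gamma)=r(\delta)$, and $s(\delta)=v$. Every such $\gamma$ and $\delta$ lies in $T(v)$, where there are no bifurcations and no cycles. Hence $\delta=p_{v,u}$ and $\gamma=p_{w_i,u}$ for a common vertex $u$ with $w_i\geq u$ and $v\geq u$.

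By Lemma~\ref{computewithpsubw} we have $p_{w_i,u}=p_{w_i,u}p_{v,u}^*p_{v,u}$ in the relevant form. The cleanest route avoids this manipulation and uses the semisimplicity of Lemma~\ref{lemma:semisimple} instead. For each $\mu\in F^+_E(T(v))$, the map $\ell r(\mu)\mapsto \ell r(\mu)p^*_{v,r(\mu)}$ is an isomorphism $L_K(E)r(\mu)\cong L_K(E)v$. In particular, $L_K(E)r(\mu)=L_K(E)v\,p_{v,r(\mu)}$.

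We then argue as follows. The element $xv$ is determined by the components $r_i\lambda_i^*$, and right multiplication by $\lambda_i$ recovers $r_iw_i=x\lambda_i\in L_K(E)v\lambda_i$ by Lemma~\ref{lemma:mu*lambda}. One reduces to showing that $r_iw_i\in K\cdot\lambda_i'$ for a suitable form. This uses the fact that $L_K(E)v$ is simple with $vL_K(E)v=Kv$; the latter holds for a line point by \cite[Proposition 2.6.11]{AAM}, or by the same path analysis.

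Concretely, $\lambda_i^*$ ends at $s(\lambda_i)$, and $x\in L_K(E)v$ forces $r_i\lambda_i^*=r_i\lambda_i^*v$. The point is that $\lambda_i^*v\neq0$ only when $s(\lambda_i)=v$ or when $\lambda_i$ is a vertex. So the main obstacle is the bookkeeping: showing that each component $r_i\lambda_i^*$ of an element of $L_K(E)v$ is a scalar multiple of $\lambda_ip^*_{v,w_i}$.

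For this step I would first compute $v\lambda_i\in vL_K(E)w_i$. Since $v$ is a line point and $w_i\in T(v)$, this space equals $K\,p_{v,w_i}$. It follows that $r_i w_i\in L_K(E)v\cdot p_{v,w_i}$, and projecting further onto the $\lambda_i$-summand yields a scalar $k_i$ with $r_i\lambda_i^*=k_i\lambda_ip^*_{v,w_i}$. The ingredients here are Lemma~\ref{computewithpsubw}(2) and the decomposition $\bigoplus_\mu L_K(E)\mu^*$ of Proposition~\ref{lemma:Rmu*}.
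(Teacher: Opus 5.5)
\paragraph{The reverse inclusion has a genuine gap.} Your easy inclusion is fine. For the reverse, the decomposition $I(v)=\bigoplus_{\mu\in F^+_E(T(v))}L_K(E)\mu^*$ of Proposition~\ref{lemma:Rmu*} carries no information about elements of $L_K(E)v$. Since $v\in T(v)\subseteq F^+_E(T(v))$, the left ideal $L_K(E)v=L_K(E)v^*$ is itself one of the summands.

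Take $x\in L_K(E)v$ and $\lambda_i\neq v$. Then $s(\lambda_i)\neq v$, because $\lambda_i$ is either a vertex of $T(v)$ other than $v$ or a path starting outside $T(v)$. Hence $r_i=r_iw_i=x\lambda_i=xv\lambda_i=0$, and the only nonzero component is the $v$-component, which is $x$ itself. (Your remark that $\lambda_i^*v\neq0$ ``when $\lambda_i$ is a vertex'' is wrong for the same reason: $wv=0$ for a vertex $w\neq v$.)

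Consequently your concluding claim $r_i\lambda_i^*=k_i\lambda_ip^*_{v,w_i}$ is false. For $\lambda_i\neq v$ the left side is $0$, and for $\lambda_i=v$ it would give $x\in Kv$, i.e.\ $L_K(E)v=Kv$. Already for the graph $u\xrightarrow{e}v$, the element $e\in L_K(E)v$ lies entirely in the summand $L_K(E)v$ and is not a multiple of $v$. Its correct expansion $e=e\,p^*_{v,v}$ uses $\lambda=e$, whose summand $L_K(E)e^*$ contains no part of $e$. All the vectors $\lambda p^*_{v,r(\lambda)}$ lie in one summand, so no ``projection'' onto summands can produce the coefficients. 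The intermediate step also points the wrong way: $vL_K(E)w_i=Kp_{v,w_i}$ gives $r_iw_i\in L_K(E)p_{v,w_i}$, which involves a real path, whereas you need the ghost $p^*_{v,w_i}$. Simplicity of $L_K(E)v$ and the equality $vL_K(E)v=Kv$ play no role.

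\paragraph{How to repair it.} The route you abandoned is the right one, and it is essentially the paper's proof. $L_K(E)v$ is spanned by monomials $\gamma\delta^*$ with $s(\delta)=v$, so $r(\gamma)=r(\delta)\in T(v)$.
\begin{enumerate}
\item Cut $\gamma=\lambda\gamma'$ at its first vertex in $T(v)$. Since $T(v)$ is hereditary, $\gamma'$ lies in $T(v)$ and $\lambda\in F^+_E(T(v))$.
\item Uniqueness of paths in $T(v)$ gives $\delta=p_{v,r(\lambda)}\gamma'$.
\item Then Lemma~\ref{computewithpsubw}(2) gives $\gamma\delta^*=\lambda\gamma'(\gamma')^*p^*_{v,r(\lambda)}=\lambda p^*_{v,r(\lambda)}$.
\end{enumerate}
If you prefer a decomposition argument, use the right-ideal version $I(T(v))=\bigoplus_{\mu}\mu L_K(E)$, obtained by applying the involution to Proposition~\ref{lemma:Rmu*}. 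By Lemma~\ref{lemma:mu*lambda}, every $x\in L_K(E)v$ satisfies $x=\sum_\mu\mu\mu^*x$ as a finite sum. Moreover $\mu^*x\in r(\mu)L_K(E)v=Kp^*_{v,r(\mu)}$, by exactly the path computation you started with $\gamma=p_{w,u}$ and $\delta=p_{v,u}=p_{v,w}p_{w,u}$.
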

\begin{proof}
Any element in $L_K(E)v$ is a $K$-linear combination of monomials $\gamma\delta^*$ with $\gamma,\delta\in\Path(E)$, $r(\gamma)=r(\delta)$, and $s(\delta)=v$. Since $T(v)$ is hereditary, $r(\delta)=r(\gamma)$ belongs to $T(v)$. 
Let us write $\gamma$ as a product $\lambda\gamma'$ with $\lambda\in F^+_E(T(v))$ and $s(\gamma')\in T(v)$. Since the vertices in  $\gamma'$ and $\delta$ are in $T(v)$ and $s(\delta)=r(\delta^*)=v$, by Lemma~\ref{computewithpsubw}
we get $\gamma'\delta^*=p^*_{v,s(\gamma')}=p^*_{v,r(\lambda)}$.
\end{proof}

Our goal for the remainder of this section is to establish \Cref{prop:simpleideallinepoint}, in which we show that every simple left ideal of $L_K(E)$ is necessarily isomorphic to $L_K(E)v$ for some line point $v \in E^0$.  

\begin{lemma}\label{Rp(c)isoRv}
Suppose $p(x) \in K[x] $ is not constant and has $p(0) \neq 0$.  
%, and is a non-unit in $K[x,x^{-1}]$, and 
Let $c$ be a cycle in the graph $E$ with $s(c) = v$.   Then $$L_K(E)p(c) \cong L_K(E)v$$ as left $L_K(E)$-modules.

\end{lemma}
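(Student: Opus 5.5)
The plan is to realize the isomorphism as right multiplication by $p(c)$. Since $s(c)=v=r(c)$, we have $v\,p(c)=p(c)=p(c)\,v$. So the map
\[\rho: L_K(E)v\to L_K(E)p(c),\qquad \ell v\mapsto \ell v\,p(c)=\ell\, p(c),\]
is a well-defined left $L_K(E)$-module homomorphism. It is surjective, because every element $\ell\, p(c)$ of $L_K(E)p(c)$ equals $\rho(\ell v)$. It therefore remains to show that $\rho$ is injective: if $x\in L_K(E)v$ satisfies $x\,p(c)=0$, then $x=0$.

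For injectivity I will use the standard $\mathbb{Z}$-grading on $L_K(E)$, in which $\deg(\gamma\delta^*)=|\gamma|-|\delta|$ (see \cite{AAM}). Vertices have degree $0$, and $c$ is homogeneous of degree $n:=|c|\geq 1$. Write $p(x)=k_0+k_1x+\cdots+k_mx^m$ with $k_0=p(0)\neq 0$, so that
\[p(c)=k_0v+k_1c+\cdots+k_mc^m,\]
where each $k_jc^j$ is homogeneous of degree $jn$. Take $x\in L_K(E)v$ with $x\neq 0$ and decompose it into finitely many nonzero homogeneous components, $x=\sum_{d\in D}x_d$ with $D$ finite. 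Because $v$ is homogeneous of degree $0$, we have $x_d=x_dv$ for each $d$.

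Let $d_0=\min D$. The degree-$d_0$ component of $x\,p(c)$ consists of the terms $x_{d}\,k_jc^j$ with $d+jn=d_0$. Since $n\geq 1$ and $d\geq d_0$, the only such term is $d=d_0$, $j=0$. So this component equals $k_0x_{d_0}v=k_0x_{d_0}$, which is nonzero because $k_0\neq0$. Hence $x\,p(c)\neq 0$, and $\rho$ is injective, which gives the isomorphism.

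I expect no real obstacle here. The only point needing care is the bookkeeping of the lowest-degree term, which is exactly where the hypothesis $p(0)\neq 0$ is used (the non-constancy hypothesis plays no role in this direction). If one prefers to avoid the grading, a fallback is to argue via the reduced normal form of monomials $\gamma\delta^*$ with $s(\delta)=v$ and compare shortest-$\gamma$ terms, but the grading argument is cleaner.
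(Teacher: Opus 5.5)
Your proposal is correct. It shares the paper's skeleton: both of you realize the isomorphism as right multiplication by $p(c)$ on $L_K(E)v$ and reduce the lemma to injectivity. The difference is in how injectivity is proved.

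The paper first normalizes to $p(c)=-v+cq(c)$ and iterates $xp(c)=0$ into $x=xc^nq(c)^n$ for every $n$. It then observes that for $n$ large the ghost parts of $x$ are absorbed by $c^n$, so that $x$ lies in the path algebra $KE$. Only then does it run a length (``degree'') argument inside $KE$.

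You instead use the $\mathbb{Z}$-grading of $L_K(E)$ directly and read off the lowest-degree component $k_0x_{d_0}$ of $xp(c)$. This is shorter and needs no normalization or iteration. It also makes plain that only $p(0)\neq 0$ is used: as you note, non-constancy plays no role in this lemma and matters only for Lemma \ref{vnotinRp(c)}. In fact, the paper's closing degree argument applied to $x=xcq(c)$ is the same lowest-degree observation as yours. The $\mathbb{Z}$-grading simply lets you make it without first moving into $KE$.

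What the paper's route buys in exchange is that it only uses the evident length grading on $KE\subseteq L_K(E)$. Its normalize-and-iterate manipulation is also reused in the proof of Lemma \ref{vnotinRp(c)}, so the two lemmas get a uniform treatment.
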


\begin{proof}

It suffices to show that  right multiplication by $p(c)$ is injective on $L_K(E)v$; i.e., to show that if $r\in L_K(E)v$ has $rp(c) = 0$ then $r=0$.   

Since $r\in L_K(E)v$ we have $r=rv$. So, assume $rvp(c)=0$.   Multiplying by $-p(0)^{-1}$ if necessary, we may assume $p(c) = -v + c q(c)$ where $q(x) \in K[x]$.   Note that $c$ commutes with $q(c)$.
So, 
$$ r(-v + cq(c)) = 0 \ \ \Rightarrow \ \ rcq(c) = r  \ \ \Rightarrow  \ \  rc^2q(c)^2 = rcq(c) = r,$$
and continuing in this way, we get 
$$ rc^nq(c)^n = rv \ \ \ \mbox{for all } n\in \mathbb{N}.$$
But for sufficiently large $n$ the expression $rc^nq(c)^n$ is an expression in only real paths.   So $rv$ is an expression in only real paths, i.e. is an element of the path $K$-algebra $KE$.  By a degree argument the equation $rc^nq(c)^n = rv$ cannot hold in $KE$ for nonzero elements, whence $rv = 0$.  
\end{proof}

\begin{lemma}\label{vnotinRp(c)}

Suppose $p(x) \in K[x] $ is not constant and has $p(0) \neq 0$.  
%, and is a non-unit in $K[x,x^{-1}]$, and 
Let $c$ be a cycle in the graph $E$ with $s(c) = v$.   Then $v \notin L_K(E)p(c)$.   Consequently, $L_K(E)p(c)$ is a proper submodule of $L_K(E)v$.   
\end{lemma}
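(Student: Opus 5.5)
Suppose toward a contradiction that $v = r\,p(c)$ for some $r\in L_K(E)$. Since $p(c)=v\,p(c)$, we may replace $r$ by $rv$ and assume $r\in L_K(E)v$. As in the proof of Lemma~\ref{Rp(c)isoRv}, after rescaling by $-p(0)^{-1}$ we write $p(c) = \kappa(-v + c\,q(c))$ with $\kappa\in K^\times$ and $q(x)\in K[x]$, $q\neq 0$ because $p$ is not constant. Absorbing $\kappa$ into $r$, the assumption becomes
\[ v = r(-v + c\,q(c)) = -r + r c q(c).\]

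The plan is to exploit the Laurent-polynomial structure of $vL_K(E)v$ along the cycle. I would use the $K$-algebra map $\pi: vL_K(E)v\supseteq K[c,c^*]$ route only indirectly. It is cleaner to build a representation of $L_K(E)$ in which $c$ acts invertibly on a suitable vector. Concretely, take the Chen-type module $V=K[c^\infty]$ spanned by infinite paths tail-equivalent to $ccc\cdots$. Twisting by a scalar $\lambda\in \overline K$ (extending scalars to the algebraic closure $\overline K$ of $K$, which preserves everything since $L_{\overline K}(E)=L_K(E)\otimes_K\overline K$), we get the module $V_\lambda$ in which $c\cdot c^\infty = \lambda\, c^\infty$ (the twisted Chen module; see \cite{AR14}). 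In $V_\lambda$ the vector $x=c^\infty$ satisfies $vx=x\neq 0$, and $p(c)x = p(\lambda)x$.

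Now choose $\lambda$ to be a root of $p$ in $\overline K$. Such a root exists and is nonzero because $p$ is nonconstant and $p(0)\ne 0$, and nonzero $\lambda$ is exactly what is needed for the twisted action to be well defined, with $c^*$ acting by $\lambda^{-1}$ on $x$. Then $v\cdot x = r\,p(c)\,x = p(\lambda)\,r x = 0$, contradicting $vx=x\neq0$. Hence $v\notin L_K(E)p(c)$. The consequence follows at once: $L_K(E)p(c)=L_K(E)p(c)v\subseteq L_K(E)v$, and the inclusion is proper since $v$ lies in the right-hand side but not the left.

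The main obstacle is verifying that the twisted Chen module is a genuine $L_K(E)$-module, that is, that the Cuntz--Krieger relations hold, including at a possible bifurcation at vertices of $c$ and when $c$ has exits. I would either cite \cite{AR14} for the twisted modules $V^f_{[c^\infty]}$ or check (CK1) and (CK2) directly on basis elements, where the twist only introduces scalars $\lambda^{\pm1}$ when an edge of $c$ acts. If this becomes messy, a fallback is a purely algebraic argument: assume $r\in L_K(E)v$ with $r(-v+cq(c))=v$. Iterating as in Lemma~\ref{Rp(c)isoRv} gives $r c^n q(c)^n = v + (v + \cdots)$, and a degree count in the grading with respect to ghost edges then shows this is impossible in the path algebra.
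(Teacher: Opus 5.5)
Your proof is correct, and it takes a genuinely different route from the paper's.

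\textbf{The paper's route.} The paper never leaves $L_K(E)$. Assuming $v=rp(c)$ with $r=rv$ and $p(c)=-v+cq(c)$, it iterates to
$r=-v-cq(c)-\cdots-c^{n-1}q(c)^{n-1}+rc^nq(c)^n$.
It then notes that $rc^n$ lies in the path algebra $KE$ once $n|c|$ exceeds the lengths of the ghost parts of $r$, so $r\in KE$. A length-degree comparison in $KE$ then forces $r=0$.

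\textbf{Your route.} You produce a witness module: a nonzero $x$ with $vx=x$ and $p(c)x=0$, so that $x=rp(c)x=0$.

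\textbf{The step you flag as the main obstacle is routine.} It does not depend on exits or bifurcations. On the span of infinite paths tail-equivalent to $c^\infty$, each basis vector has a unique first edge. So if you set $e\cdot q'=\omega(e,q')\,eq'$ and $e^*\cdot(eq')=\omega(e,q')^{-1}q'$ for arbitrary nonzero scalars $\omega$, the relations (V), (E1), (E2), (CK1), (CK2) hold exactly as in Chen's untwisted module. Write $c=e_1\cdots e_n$, and take $\omega=\lambda$ for the pair $(e_1,e_2\cdots e_nc^\infty)$ and $\omega=1$ otherwise. This gives $c\cdot c^\infty=\lambda c^\infty$, and it is exactly here that $\lambda\neq 0$ is needed.

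\textbf{Build it directly rather than citing Ara--Rangaswamy.} If the twisted module is defined as a quotient $L_K(E)v/L_K(E)(c-\lambda v)$, then its being nonzero is essentially the statement you are proving. For the scalar extension, all you need is the $K$-algebra map $L_K(E)\to L_{\overline{K}}(E)$ given by the universal property. Alternatively, work over $K[x]/(g)$ for an irreducible factor $g$ of $p$.

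\textbf{What each approach buys.} The paper's proof is elementary, works over $K$ itself, and reuses the iteration from Lemma~\ref{Rp(c)isoRv}. Yours avoids normal-form and degree bookkeeping. It also yields slightly more: a nonzero homomorphism from $L_K(E)v/L_K(E)p(c)$ into a twisted Chen module. The cost is enlarging the field and constructing that module.

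\textbf{Minor points.}
\begin{itemize}
\item Your normalization $p(c)=\kappa(-v+cq(c))$ plays no role in the main argument.
\item The sentence about $\pi\colon vL_K(E)v\supseteq K[c,c^*]$ is garbled and should be deleted.
\item The fallback identity $rc^nq(c)^n=v+(v+\cdots)$ is not correct as written. The right iterate is the one displayed above, and that fallback is precisely the paper's proof.
\end{itemize}
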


\begin{proof}

Multiplying by $-p(0)^{-1}$ if necessary, we may assume $p(c) = -v + c q(c)$ where $q(x) \in K[x]$.    Assuming $v = rp(c)$ for some $r\in L_K(E)$, we obtain a contradiction as follows.     Since $vp(c) = p(c)$ we may assume $r=rv$.  
$$ v = rp(c) \ \ \Rightarrow \ \ v = r (-v + cq(c)) \ \ \Rightarrow \ \ r=rv = -v + rcq(c).$$
Now substitute this expression for $r = rv$ into the right hand side, multiple times:
\begin{align*}
r = -v + (-v + rcq(c))(cq(c))  \ \ &\Rightarrow \ \ r = -v - cq(c) + rc^2q(c)^2\\
&\Rightarrow  \ \ r = -v - cq(c) - c^2q(c)^2 + rc^3 q(c)^3, 
\end{align*}
and continuing in this way, we get
$$r = -v - cq(c) - c^2q(c)^2 - \cdots  - c^{n-1}q(c)^{n-1}+ rc^n q(c)^n.$$
But $rc^n q(c)^n$ is in $KE$ for sufficiently large $n$.  Since the other summands in the displayed expression are already in the path $K$-algebra $KE$ we conclude that $r \in KE$.   As above, by a degree argument, this cannot happen in $KE$ unless $r = 0$.  
But then $v = rp(c) $ gives $v=0$, a contradiction.  
\end{proof}

\Cref{Rp(c)isoRv,vnotinRp(c)} immediately give 
\begin{lemma} \label{notsimple}
Suppose $p(x) \in K[x] $ is not constant and has $p(0) \neq 0$.  
%, and is a non-unit in $K[x,x^{-1}]$, and 
Let $c$ be a cycle in the graph $E$.   Then the left ideal $L_K(E)p(c)$ is not artinian (and so a fortiori is not simple).

Specifically, we have 
\[ L_K(E)p(c) \supsetneq L_K(E)p(c)^2 \supsetneq L_K(E)p(c)^3 \supsetneq \cdots\]

\end{lemma}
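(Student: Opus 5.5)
The plan is to show that the chain
\[ L_K(E)p(c) \supsetneq L_K(E)p(c)^2 \supsetneq L_K(E)p(c)^3 \supsetneq \cdots\]
is strictly descending. This already shows $L_K(E)p(c)$ is not artinian, hence not simple.

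First I record the easy facts. Set $v=s(c)$. Since $p(c)=vp(c)v$, we have $p(c)^n=vp(c)^nv$ for every $n$. Moreover $p(c)^n=p^n(c)$, where $p^n(x)$ is again a nonconstant polynomial with $p^n(0)=p(0)^n\neq 0$. Hence \Cref{Rp(c)isoRv} and \Cref{vnotinRp(c)} apply to $p^n$ for each $n\ge 1$. The inclusions $L_K(E)p(c)^{n+1}\subseteq L_K(E)p(c)^n$ are clear, because $p(c)^{n+1}=p(c)\,p(c)^n$.

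The main step is strictness. Suppose $p(c)^n=r\,p(c)^{n+1}$ for some $r\in L_K(E)$. Rewrite this as $\bigl(v-rp(c)\bigr)p(c)^n=0$. Multiplying on the left by $v$ if necessary, we may assume $r=vr$, so that $v-rp(c)$ lies in $vL_K(E)v\subseteq L_K(E)v$. The proof of \Cref{Rp(c)isoRv}, applied to the polynomial $p^n$, shows that right multiplication by $p(c)^n$ is injective on $L_K(E)v$. Therefore $v-rp(c)=0$, that is, $v=rp(c)\in L_K(E)p(c)$. This contradicts \Cref{vnotinRp(c)}. Hence $p(c)^n\notin L_K(E)p(c)^{n+1}$, and every inclusion in the chain is proper.

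The only point that needs care is that injectivity of right multiplication holds on all of $L_K(E)v$, and not merely on $L_K(E)p(c)$. This is exactly what the proof of \Cref{Rp(c)isoRv} establishes, and it is what we need, since $v-rp(c)$ lies in $L_K(E)v$.
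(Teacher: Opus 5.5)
Your proof is correct and takes the paper's approach. The paper only says the result follows immediately from Lemmas~\ref{Rp(c)isoRv} and~\ref{vnotinRp(c)}, and your argument supplies exactly that detail: right multiplication by $p(c)^n=p^n(c)$ is injective on $L_K(E)v$, and $v\notin L_K(E)p(c)$. You are also right to rely on the injectivity of right multiplication established in the proof of Lemma~\ref{Rp(c)isoRv}, rather than on the abstract isomorphism in its statement, since that is what makes the chain consist of the specific ideals $L_K(E)p(c)^n$.
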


\begin{proposition}\label{prop:simpleideallinepoint}
Suppose $I$ is a simple left ideal of $L_K(E)$.   Then $I\cong L_K(E)v$ as left $L_K(E)$-modules for some line point $v$.
\end{proposition}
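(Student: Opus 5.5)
We will use the Reduction Theorem for Leavitt path algebras \cite[Theorem~2.2.11]{AAM}: for every nonzero $x\in L_K(E)$ there exist $\alpha,\beta\in\Path(E)$ such that $0\neq \alpha^*x\beta$ is either $kw$ for some $k\in K^\times$ and $w\in E^0$, or $p(c)$ for some cycle $c$ without exits and some nonzero $p(x)\in K[x,x^{-1}]$. Let $I$ be a simple left ideal and pick $0\neq x\in I$.

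The first step passes from $I$ to an ideal of the form $L_K(E)y$. Since $I$ is simple, $I=L_K(E)x$. Right multiplication by $\beta$ maps $I$ onto $L_K(E)x\beta$, which is a nonzero left ideal; by simplicity of $I$ this map is injective, so $L_K(E)x\beta\cong I$ is simple. Moreover $\alpha^*x\beta\in L_K(E)x\beta$ is nonzero, so $L_K(E)x\beta=L_K(E)\alpha^*x\beta$. Thus $I\cong L_K(E)y$, where $y$ is one of the two reduced forms above.

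The second step treats the two cases. If $y=kw$, then $L_K(E)w$ is simple, and \cite[Proposition 2.6.11]{AAM} shows that $w$ is a line point, which finishes the proof. If $y=p(c)$, write $p(x)=x^m q(x)$ with $q\in K[x]$ and $q(0)\neq 0$. Because $c$ has no exits, $c$ is invertible in the corner $vL_K(E)v$, where $v=s(c)$, with inverse $c^*$. Hence $L_K(E)p(c)=L_K(E)q(c)$. If $q$ is constant, then $L_K(E)p(c)=L_K(E)v$. But $L_K(E)v$ contains the proper nonzero submodule $L_K(E)(v-c)$ by \Cref{vnotinRp(c)} applied to $1-x$, so it is not simple, a contradiction. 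If $q$ is nonconstant, \Cref{notsimple} says $L_K(E)q(c)$ is not simple, again a contradiction. So only the vertex case survives.

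The main obstacle is making sure the Reduction Theorem gives exactly these forms with the cycle case written as a Laurent polynomial in $c$. Reducing Laurent polynomials to ordinary polynomials with nonzero constant term needs the no-exit invertibility $cc^*=c^*c=v$. I expect this to be routine, but it has to be checked using (CK2) along each vertex of $c$.
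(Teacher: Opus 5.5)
Your proposal is correct and follows essentially the same route as the paper. You reduce $I$ via the Reduction Theorem to $L_K(E)y$ with $y=kw$ or $y=p(c)$, rule out the cycle case with the non-simplicity lemmas for $L_K(E)p(c)$, and conclude with the line-point characterization of simple ideals $L_K(E)w$. The differences are cosmetic: you multiply on the right by $\beta$ before the left multiplication by $\alpha^*$ (the paper does the reverse), and you spell out the Laurent-to-polynomial reduction and the constant case using $cc^*=c^*c=s(c)$, which the paper compresses by assuming $p$ is a non-unit and normalizing to $-v+cq(c)$.
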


\begin{proof}

Write $I = L_K(E)x$ for some $0\neq x\in L_K(E)$.     By the Reduction Theorem \cite[Theorem 2.2.11]{AAM} there exist $\alpha, \beta \in {\rm Path}(E)$ for which either:

\smallskip
(1)  $\beta^* x \alpha = kv$ for some $0\neq k \in K$ and $v\in E^0$,  \ or

(2)   $\beta^* x \alpha = p(c)$ where $p(x) $ is a non-unit in $K[x,x^{-1}]$, and $c$ is a cycle (without exits) in $E$. 

\smallskip

We claim that when $L_K(E)x$ is simple, then $ L_K(E) \beta^* x \alpha \cong L_K(E)x$.   Since  $\beta^* x \alpha \neq 0$ we have $\beta^* x  \neq 0$, so that $L_K(E) \beta^* x$ is a nonzero submodule of $L_K(E)x$, whence $L_K(E)x = L_K(E)\beta^* x $. Now, define $\varphi: L_K(E)\beta^* x  \to L_K(E) \beta^* x \alpha$ by right multiplication by $\alpha$.   Since $\beta^* x \alpha \neq 0$ then $\varphi$ is not the zero map, so $\varphi$ is injective by the simplicity of $L_K(E)\beta^* x = L_K(E)x$.   And clearly $\varphi$ is surjective.

Using the claim, we conclude that the possibility (2) of the Reduction Theorem cannot occur here, as follows.  Suppose $\beta^* x \alpha = p(c)$ for some $\alpha, \beta$ paths in $E$,  where $p(x) $ is a non-unit in $K[x,x^{-1}]$.  Multiplying on the left by a power of $c^*$ or on the right by a power of $c$, and then by an appropriate nonzero constant in $K$,  we may assume that $p(c) = -v + cq(c)$ for some $0 \neq q(x) \in K[x]$.   But then the simple left $L_K(E)$-module $L_K(E)x$ has $L_K(E)x \cong L_K(E)\beta^*x\alpha$ (by the claim), which in turn equals  $L_K(E)p(c)$, which is not simple by Lemma \ref{notsimple}, a contradiction.  

Thus possibility (1) of the Reduction Theorem must hold, i.e., $\beta^* x \alpha = kv$ for some $0\neq k \in K$ and $\alpha, \beta \in {\rm Path}(E)$. Again using the claim $ L_K(E) \beta^* x \alpha \cong L_K(E)x$, so that $L_K(E)x \cong L_K(E)kv= L_K(E)v$. 

The result now follows from \cite[Proposition 2.6.11]{AAM}.
\end{proof}

Consequently, 

\begin{corollary}\label{linepointsufficient}
Let $E$ be an arbitrary graph, and $K$ any field.  In order to construct the injective envelope of any simple left $L_K(E)$-ideal, it suffices to construct the injective envelope of each simple left $L_K(E)$-ideal of the form $L_K(E)v$ for $v$ a line point in $E$. 
\end{corollary}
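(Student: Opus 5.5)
This corollary is a direct consequence of \Cref{prop:simpleideallinepoint} together with the fact that injective envelopes are invariant under isomorphism. I will argue it in three short steps.

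\textbf{Step 1.} Let $I$ be an arbitrary simple left ideal of $L_K(E)$. By \Cref{prop:simpleideallinepoint} there is a line point $v\in E^0$ and an isomorphism of left $L_K(E)$-modules $\varphi: I\to L_K(E)v$.

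\textbf{Step 2.} Suppose we have constructed an injective envelope $\iota: L_K(E)v\hookrightarrow Q$. Since $L_K(E)$ has enough idempotents, the category $L_K(E)$-Mod of unitary modules has enough injectives, as recalled in \Cref{section:associativering}. Hence injective envelopes exist and are unique up to isomorphism, exactly as over unital rings. I then claim that $\iota\circ\varphi: I\hookrightarrow Q$ is an injective envelope of $I$. Injectivity of $Q$ is unchanged. Essentiality also transfers: the image of $\iota\circ\varphi$ equals $\iota(L_K(E)v)$, which is essential in $Q$ by hypothesis.

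\textbf{Step 3.} We conclude that $E(I)\cong E(L_K(E)v)$. So once the injective envelope of every $L_K(E)v$ with $v$ a line point is known, the injective envelope of every simple left ideal is known, which is the assertion.

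There is no real obstacle in this argument. The only point needing care is that the notion of injective envelope behaves as usual in the category of unitary modules over a ring with local units. This is standard, since that category is Grothendieck with enough injectives, and I will simply cite it.
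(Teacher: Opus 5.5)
Your proposal is correct and matches the paper, which presents this corollary as an immediate consequence of Proposition~\ref{prop:simpleideallinepoint}: every simple left ideal is isomorphic to some $L_K(E)v$ with $v$ a line point, and injective envelopes are invariant under isomorphism. Your Step 2 makes explicit the routine transfer of the essential embedding along that isomorphism, which the paper leaves implicit.
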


We complete exactly this sufficient task in our main result, Theorem \ref{injhulltheorem}.

\section{The injective envelope of simple left $L_K(E)$-ideals}\label{section:injenvline}

Let $E$ be an arbitrary graph, $K$ any field,  and  $L_K(E)$ the associated Leavitt path algebra.
By Proposition~\ref{prop:simpleideallinepoint} any simple left $L_K(E)$-ideal is isomorphic to a left $L_K(E)$-ideal generated by some line point.
\begin{center}
\emph{We assume throughout this section that $v$ denotes a line point in $E$.} 

\smallskip

\emph{Throughout this section $L_K(E)$ will often be denoted simply by $R$.}

\end{center}
%Denote by $R$ the $K$-algebra $L_K(E)$ and by $I(v)$ the two sided ideal of $R$ generated by $v$. 
 
We seek to explicitly describe the injective envelope of the simple left $R$-module $Rv$.  We achieve this goal in three steps.  First, we build a $K$-vector space inside which $Rv$ lives as a subspace (Definition \ref{def:sourcefinite}).  The form of the elements of this larger vector space will be motivated by  the form of the elements of $Rv$ as presented in Lemma  \ref{lemma:leftidealLine}.   Second, we expend some effort to show that this larger vector space can be endowed with an $R$-module structure (Proposition  \ref{prop:formalisRmod}), and then somewhat  easily show that this $R$-module contains $Rv$ as an essential submodule (Lemma \ref{L(E)vessentialinMv}).   We conclude by showing that this  overmodule of $Rv$ is in fact injective (Theorem \ref{injhulltheorem}). \\    

\subsection*{First step}  
Using Lemma \ref{lemma:mu*lambda}, we see that  the map $F^+_E(T(v))\to \Path(\widehat E)$, $\mu\mapsto \mu p^*_{v,r(\mu)}$ is injective.
So, following what we observed at the end of Section \ref{section:associativering} regarding formal series,  we give the following

\begin{definition}\label{Mvdef}
For a line point $v$ in the graph $E$, we denote by 
$$K[[F^+_E(T(v))]]$$ the $K$-vector space of all functions from $F^+_E(T(v))$ to $K$. We represent any element $\mathfrak p$ in $K[[F^+_E(T(v))]]$ with the following ``formal series notation'':
\[\mathfrak p=\sum_{\mu\in F^+_E(T(v))}k_\mu \mu p^*_{v,r(\mu)}\qquad k_\mu\in K,\]
where $k_\mu=\mathfrak p(\mu)=\left(\sum_{\mu\in F^+_E(T(v))}k_\mu \mu p^*_{v,r(\mu)}\right)(\mu)$.
\end{definition}

%\begin{remark}\label{rem:formser}
  %  Observe that each $\mu\in F^+_E(T(v))$  uniquely determines the ghost path $p^*_{v,r(\mu)}$. Thus, it is not inappropriate to use $\mu p^*_{v,r(\mu)}$  as a ``placeholder" for the scalar $k_\mu$. This notation allows us to naturally view $Rv$ as a $K$-subspace of the $K$-vector space $K[[F^+_E(T(v))]]$ (see \Cref{lemma:leftidealLine}).
%\end{remark}

\begin{definition}\label{def:sourcefinite}
A formal series $\mathfrak p$ in $K[[F^+_E(T(v))]]$ is \emph{source finite} if the set
 $S(\mathfrak p):=\{s(\mu):\mathfrak p(\mu)\not=0\}$ is finite. 
We denote by \[K[[F^+_E(T(v))]]^{sf}\] the $K$-vector subspace of $K[[F^+_E(T(v))]]$ consisting of all source finite formal series.
 \end{definition}
 For any element $\mathfrak p$ in
 $K[[F^+_E(T(v))]]^{sf}$, denote by  $s(\mathfrak p)$ the finite sum 
\[s(\mathfrak p):=\sum_{u\in S(\mathfrak p)}u.\]
Clearly $s(\mathfrak p)$ is an idempotent in $R$. 

Easily,  the set $\{ u\in E^0 \ | \ u \geq w, \mbox{ for some } w\in T(v)\}$ is finite if and only if   $K[[F^+_E(T(v))]]=K[[F^+_E(T(v))]]^{sf}$.  In particular, this equality holds in case $E$ is a finite graph.

%If  the set of vertices $u$ in $E^0$ such that $u\geq w$ for some $w\in T(v)$ is finite (in particular $T(v)$ has to be finite), then  $K[[F^+_E(T(v))]]=K[[F^+_E(T(v))]]^{sf}$.\\

The elements of $Rv$ are clearly source finite: they are the formal series 
\[\mathfrak p=\sum_{\mu\in F^+_E(T(v))}k_\mu \mu p^*_{v,r(\mu)}\in K[[F^+_E(T(v))]]\]
for which $k_\mu=0$ for almost all $\mu\in F^+_E(T(v))$ (see \Cref{lemma:leftidealLine}). In particular $v = 1_K v p_{v,v}^*$ is the source finite formal series $\mathfrak p$ such that $\mathfrak p(v)=1_K$ and $\mathfrak p(\mu)=0$ for each $\mu\in F^+_E(T(v))\setminus\{v\}$.

\medskip

\subsection*{Second step}
We want now to prove that the $K$-vector space $K[[F^+_E(T(v))]]^{sf}$ is a left $R$-module that contains the simple left $R$-module $Rv$ as an essential submodule.

\begin{proposition}\label{prop:formalisRmod}
The $K$-vector space $K[[F^+_E(T(v))]]$ is a left $R$-premodule, while $K[[F^+_E(T(v))]]^{sf}$ is a left $R$-module. Indeed,
\[K[[F^+_E(T(v))]]^{sf}=R\cdot K[[F^+_E(T(v))]].
\]
\end{proposition}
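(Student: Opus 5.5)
The plan is to realize the $R$-action on $K[[F^+_E(T(v))]]$ through a Cuntz--Krieger $E$-family of $K$-linear endomorphisms. By the universal property of $L_K(E)$ (which holds for arbitrary graphs and does not require a unit), such a family gives a $K$-algebra homomorphism $R\to\End_K(K[[F^+_E(T(v))]])$, and hence a premodule structure. Concretely, for $\mathfrak p=\sum k_\mu\mu p^*_{v,r(\mu)}$ I would define the operators by mimicking left multiplication in $R$ on the ``monomials'' $\mu p^*_{v,r(\mu)}$, as follows.

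\emph{Vertices.} For $u\in E^0$, let $P_u(\mathfrak p)=\sum_{s(\mu)=u}k_\mu\mu p^*_{v,r(\mu)}$.

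\emph{Real edges.} For $e\in E^1$, define $S_e$ in two cases.
If $s(e)\notin T(v)$, send the coefficient of $\mu$ (with $s(\mu)=r(e)$) to the coefficient of $e\mu$. Here one checks that $e\mu\in F^+_E(T(v))$: if $\mu$ has positive length, then $r(e)=s(\mu)\notin T(v)$; if $\mu=w$ is a vertex, then $e\mu=e$ with $r(e)\in T(v)$.
If $s(e)\in T(v)$, then $r(e)=w\in T(v)$ and $p_{v,w}=p_{v,s(e)}e$. Using (CK2) at $s(e)$ (no bifurcations), $ep^*_{v,w}=ee^*p^*_{v,s(e)}=p^*_{v,s(e)}$. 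So $S_e$ moves the coefficient of the vertex $w$ to the coefficient of the vertex $s(e)$.

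\emph{Ghost edges.} $S_{e^*}$ sends the coefficient of $e\mu'$ to that of $\mu'$. If $s(e)=u\in T(v)$, it sends the coefficient of the vertex $u$ to that of $r(e)$, since $e^*p^*_{v,u}=p^*_{v,r(e)}$. All other coefficients are sent to $0$.

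Each output coefficient depends on at most one input coefficient, so all these maps are well defined on arbitrary, possibly infinite, formal series and are $K$-linear.

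Next I would verify the relations. Orthogonality $P_uP_{u'}=\delta_{u,u'}P_u$ is immediate. The relations $P_{s(e)}S_e=S_e=S_eP_{r(e)}$ and $P_{r(e)}S_{e^*}=S_{e^*}=S_{e^*}P_{s(e)}$ require only checking sources, split according to whether $s(e)\in T(v)$. (CK1), i.e. $S_{e^*}S_f=\delta_{e,f}P_{r(e)}$, follows by following a single coefficient, using the uniqueness of factorizations $e\mu'$. The main obstacle is (CK2): $\sum_{s(e)=u}S_eS_{e^*}=P_u$ for a regular vertex $u$. For $u\notin T(v)$, every $\mu$ with $s(\mu)=u$ has positive length, since $u\notin T(v)$ means $\mu\neq u$. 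So $\mu$ begins with a unique edge $e$ out of $u$, and $S_eS_{e^*}$ restores its coefficient while the other summands kill it. For $u\in T(v)$, the only relevant $\mu$ is the vertex $u$ itself, and $u$ emits exactly one edge $e$. Then $S_{e^*}$ moves the coefficient to $r(e)$ and $S_e$ moves it back to $u$. I expect this case split and the bookkeeping in the $T(v)$-versus-non-$T(v)$ cases to be the fiddly part; the full verification would be deferred to the Appendix.

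Finally I would prove the unitary statement. First, $K[[F^+_E(T(v))]]^{sf}$ is an $R$-submodule: if $a$ is a finite sum of vertices, then $a\cdot\mathfrak q=\sum_{u\in a}P_u\mathfrak q$ only involves $\mu$ with $s(\mu)$ among the vertices of $a$, so it is source finite. Since every $r\in R$ satisfies $r=ra$ for some such local unit $a$, we get $R\cdot K[[F^+_E(T(v))]]\subseteq K[[F^+_E(T(v))]]^{sf}$. Conversely, if $\mathfrak p$ is source finite, then $\mathfrak p=s(\mathfrak p)\cdot\mathfrak p$, because $\sum_{u\in S(\mathfrak p)}P_u$ fixes every coefficient with nonzero value. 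This gives the reverse inclusion and shows that $K[[F^+_E(T(v))]]^{sf}=R\cdot K[[F^+_E(T(v))]]$ is a unitary module, i.e. a left $R$-module.
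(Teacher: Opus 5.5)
Your proposal is essentially the paper's proof. It uses the same Cuntz--Krieger family $P_u,S_e,S_{e^*}$ on $K[[F^+_E(T(v))]]$, split by whether $s(e)\in T(v)$, the same relation checks (with no bifurcations in $T(v)$ handling (CK1) and (CK2)), and the same unitarity argument via $s(\mathfrak p)\cdot\mathfrak p=\mathfrak p$ and local units. There is one small slip: for $R\cdot K[[F^+_E(T(v))]]\subseteq K[[F^+_E(T(v))]]^{sf}$ you need the local unit on the left, $r=ar$, so that $r\cdot\mathfrak p=a\cdot(r\cdot\mathfrak p)$ is visibly source finite (the paper uses $(\sum_{i=1}^n v_i)r=r$). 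Writing $r=ra$ only gives $r\cdot(a\cdot\mathfrak p)$, which presupposes the closure you are proving; since local units in $L_K(E)$ work on both sides, the fix is immediate.
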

\begin{proof}
We claim that there is Cuntz-Krieger $E$-family in ${\rm End}_K(K[[F^+_E(T(v))]])$.   We establish the claim in the Appendix (Section \ref{sec:appendix}), by explicitly defining the elements $ P_u, S_e, S_{e^*}$ ($u\in E^0, e\in E^1$) of 
${\rm End}_K(K[[F^+_E(T(v))]])$, and then demonstrating that this specific collection forms a  Cuntz-Krieger $E$-family. 

The existence of a Cuntz-Krieger $E$-family in ${\rm End}_K(K[[F^+_E(T(v))]])$ then guarantees the existence of a ring homomorphism $\Phi: R \to {\rm End}_K(K[[F^+_E(T(v))]])$, which in turn (by standard ring theory) endows $K[[F^+_E(T(v))]]$ with an $R$-premodule structure by setting $$x \cdot m := \Phi(x)(m)$$
for $x\in R$ and $m\in K[[F^+_E(T(v))]]$.  

As one can see from their explicit descriptions as given in the Appendix, the endomorphisms $ P_u, S_e, S_{e^*}$ ($u\in E^0, e\in E^1$) have the effect, respectively, of  multiplying by $u$, $e$, and $e^*$ each of the individual terms of the formal series $\mathfrak p=\sum_{\mu\in F^+_E(T(v))}k_\mu \mu p^*_{v,r(\mu)}$.   
More formally,

$$P_u(\sum_{\mu\in F^+_E(T(v))}k_\mu \mu p^*_{v,r(\mu)}) = \sum_{\mu\in F^+_E(T(v))}k_\mu u\mu p^*_{v,r(\mu)} \ \ \forall \ u \in E^0,$$

$$S_e(\sum_{\mu\in F^+_E(T(v))}k_\mu \mu p^*_{v,r(\mu)}) = \sum_{\mu\in F^+_E(T(v))}k_\mu e\mu p^*_{v,r(\mu)} \ \ \forall \ e \in E^1, $$

$$S_{e^*}(\sum_{\mu\in F^+_E(T(v))}k_\mu \mu p^*_{v,r(\mu)}) = \sum_{\mu\in F^+_E(T(v))}k_\mu e^*\mu p^*_{v,r(\mu)} \ \ \forall \ e \in E^1.$$

%Multiplying by $u$, $e$, and $e^*$ all summands of the formal series on $\mathfrak p=\sum_{\mu\in F^+_E(T(v))}k_\mu\cdot \mu p^*_{v,r(\mu)}$, we endow $K[[F^+_E(T(v))]]$ with a left $R$-premodule structure. In \Cref{sec:appendix} the latter claim is explained in detail.
% Let us prove that there exists a $K$-algebra homomorphism
% \[\Phi:R\to \End_K(K[[F^+_E(T(v))]]).\]
% \textbf{Claim}:
% The subset $\{P_v, S_e, S_{e^*}\mid v\in E^0, e\in E^1\}$ forms a Cuntz Krieger $E$-family in $\End(K[[F^+_E(T(v))]])$ (see \Cref{sec:appendix} for a detailed proof).
%   Consequently,  by the Universal Property of $L_K(E)$ (see, e.g., \cite[Remark 1.2.5]{AAM}),  there exists a $K$-algebra homomorphism 
%   \[\Phi:  R \to  \End(K[[F^+_E(T(v))]]),\]  which thereby endows $K[[F^+_E(T(v))]]$ with the structure of left $R$-premodule.    Specifically, for $r\in R$ and $m\in K[[F^+_E(T(v))]]$,  $$r\cdot m := \Phi(r)(m).$$
We now consider the $K$-subspace $K[[F^+_E(T(v))]]^{sf}$ of $K[[F^+_E(T(v))]]$.  We claim that $K[[F^+_E(T(v))]]^{sf}$ is closed under the premodule action of $R$ on  $K[[F^+_E(T(v))]]$ defined above, and that under this action $K[[F^+_E(T(v))]]^{sf}$ is in fact unitary. 
Because $R\cdot R = R$, to establish the claim it suffices to show that 
 $$R\cdot K[[F^+_E(T(v))]]=K[[F^+_E(T(v))]]^{sf}.$$ Let $u\in E^0$ and  $\mathfrak p = \sum_{\mu\in F^+_E(T(v))}k_\mu \mu p^*_{v,r(\mu)} \in K[[F^+_E(T(v))]]$.   Then using the action defined above, we get
 $$ u \cdot \mathfrak p = \sum_{\mu\in F^+_E(T(v)), \ s(\mu) = u}k_\mu \mu p^*_{v,r(\mu)} .$$
 \noindent 
This shows that for each $u\in E^0$ and $\mathfrak p \in K[[F^+_E(T(v))]]$ we have 
$$u \cdot \mathfrak p = 0 \ \ \Leftrightarrow \ \ u \notin S(\mathfrak p).$$
 Let $\mathfrak q \in K[[F^+_E(T(v))]]^{sf}$, 
 and write   $\mathfrak q = \sum_{\nu\in F^+_E(T(v))}k_\nu \nu p^*_{v,r(\nu)}.$ 
  Since $\mathfrak q$ is source finite  the set $S(\mathfrak q):= \{s(\nu) : k_\nu\neq 0\}$ is finite; as defined above, let $s(\mathfrak q)$ denote the idempotent $\sum_{u\in S(\mathfrak q)}u$ of $R$.   Then  it is easy to show that $s(\mathfrak q) \cdot \mathfrak q = \mathfrak q$, which yields \[R\cdot K[[F^+_E(T(v))]] \ \supseteq \ R\cdot K[[F^+_E(T(v))]]^{sf}\ \supseteq \ K[[F^+_E(T(v))]]^{sf}.\]
 Conversely,  let $r\in R$ and $\mathfrak p\in   K[[F^+_E(T(v))]]$.
   %;  write $m = \sum_{\mu\in F^+_E(T(v))}k_\mu \mu p^*_{v,r(\mu)}.$  
 %Again invoking the action described above, 
 %$$m = \sum_{\mu\in F^+_E(T(v))}k_\mu \mu p^*_{v,r(\mu)}.$$
 Since the set of finite sums of distinct vertices of $E$ forms a set of local units for $R$, there exist vertices $v_1, \dots , v_n$ in $E^0$ for which $(\sum_{i=1}^n v_i)  r = r$.   In particular if $u \in E^0 \setminus \{v_1, \dots , v_n\}$ then $ur=0$.   But then for any such $u$ we have $u\cdot (r\cdot \mathfrak p) = (ur) \cdot \mathfrak p = 0$. Thus, by the previous observation $u\notin S(r\cdot \mathfrak p)$, so that $S(r \cdot \mathfrak p) \subseteq \{v_1, \dots , v_n\}$ is finite, and thus $r\cdot \mathfrak p \in K[[F^+_E(T(v))]]^{sf}.$
 \end{proof}

  For notational convenience, we denote by $X_v^E$ the left $L_K(E)$-module
$$X_v^E:= K[[F^+_E(T(v))]]^{sf},$$
and we call $X_v^E$ the {\it formal series left $L_K(E)$-module associated to the line point $v$}.

\begin{lemma}\label{L(E)vessentialinMv} Let $v$ be a line point in the graph $E$.
Then $Rv$ is an essential submodule of the left $R$-module 
%$X_v^E$.
$X_v^E$. 
\end{lemma}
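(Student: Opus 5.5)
The standard criterion for essentiality is enough here: given any nonzero $\mathfrak p\in X_v^E$, we will find $x\in R$ with $0\neq x\cdot\mathfrak p\in Rv$. Since $Rv$ is simple, this shows every nonzero submodule of $X_v^E$ meets $Rv$. The tool is the action described in the proof of Proposition~\ref{prop:formalisRmod}: $P_u$, $S_e$, $S_{e^*}$ act termwise, multiplying each summand $\mu p^*_{v,r(\mu)}$ on the left by $u$, $e$, $e^*$ and rewriting the result in $L_K(E)$.

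Pick $\lambda\in F^+_E(T(v))$ with $k_\lambda=\mathfrak p(\lambda)\neq 0$ and act by $\lambda^*$. For a single term, Lemma~\ref{lemma:mu*lambda} gives
\[
\lambda^*\mu p^*_{v,r(\mu)}=
\begin{cases}
r(\lambda)\,p^*_{v,r(\lambda)} & \text{if } \mu=\lambda,\\
0 & \text{otherwise}.
\end{cases}
\]
Assuming the termwise description is legitimate for products of generators (composition of the $S_{e^*}$ along the edges of $\lambda$), we get
\[
\lambda^*\cdot\mathfrak p=k_\lambda\, p^*_{v,r(\lambda)}.
\]
This is a nonzero element of $Rv$: right multiplication by $p_{v,r(\lambda)}$ gives $r(\lambda)$, using $p^*_{v,w}p_{v,w}=w$. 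It is nonzero as a formal series as well, since it equals $k_\lambda\,p_{v,r(\lambda)}^{*}$, the series whose only nonzero coefficient sits at $\mu=s(p^*_{v,r(\lambda)})$. Concretely, $p_{v,r(\lambda)}^*=r(\lambda)p^*_{v,r(\lambda)}$ with $r(\lambda)\in T(v)\subseteq F^+_E(T(v))$, so it is the basis series at the length-zero path $r(\lambda)$. Finally, multiplying by $p_{v,r(\lambda)}^{*}$ on the left and using Lemma~\ref{computewithpsubw} gives
\[
p_{v,r(\lambda)}\lambda^*\cdot\mathfrak p=k_\lambda v,
\]
so in fact $v\in R\mathfrak p$, which is even cleaner.

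The main obstacle is justifying that the action really is computed termwise in this way. The endomorphisms are only described informally here, with the precise definitions deferred to the Appendix, so we must check that $S_{e^*}$ applied to a series kills exactly the terms whose first edge is not $e$ (and the vertex terms not equal to $s(e)$, etc.). Here a subtlety arises: for $\mu=u\in T(v)$ a vertex, the product $e^*u\,p^*_{v,u}$ equals $e^*p^*_{v,u}=(p_{v,u}e)^*$. This is nonzero when $u=s(e)$, and there the line-point property gives $e=$ the unique edge out of $u$, so $e^*p^*_{v,u}=p^*_{v,r(e)}$, which is the vertex term at $r(e)$. Thus when $\lambda$ begins inside $T(v)$ (i.e.\ $\lambda$ is a vertex $w\in T(v)$), acting by $\lambda^*=w$ is harmless. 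When $\lambda$ has positive length with $s(\lambda)\notin T(v)$, the first step $e_1^*$ annihilates all vertex terms, since they have sources in $T(v)$. It also annihilates all paths not starting with $e_1$, and then Lemma~\ref{lemma:mu*lambda} handles the rest. I would verify these cases against the Appendix formulas, and that finishes the argument.
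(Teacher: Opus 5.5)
Your proof is correct and is essentially the paper's own argument. You pick $\lambda$ with $k_\lambda\neq 0$, act by $k_\lambda^{-1}p_{v,r(\lambda)}\lambda^*$, and use Lemma~\ref{lemma:mu*lambda} and Lemma~\ref{computewithpsubw} to obtain $v$; your extra check of the termwise action against the Appendix formulas simply makes explicit what the paper leaves implicit. One slip: in the sentence just before your last display you should multiply on the left by $p_{v,r(\lambda)}$, not $p^*_{v,r(\lambda)}$, as the display itself correctly shows.
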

\begin{proof} 
We already observed (see Lemma \ref{lemma:leftidealLine}) that $Rv\subseteq  X_v^E$. Moreover, using the comment made before Proposition~\ref{prop:formalisRmod}, we see that the left $R$-module structure of $X_v^E$ makes $Rv$ a left $R$-submodule of $X_v^E$.

Now let $\mathfrak p = \sum_{\mu\in F^+_E(T(v))} k_\mu\mu p^*_{v,r(\mu)}$ be a nonzero  formal series in $X_v^E$.
%;  specifically, the series is source finite.   
Pick $\gamma \in F^+_E(T(v))$ with $k_\gamma \neq 0$.   Multiplying $\mathfrak p$ on the left by $k_\gamma^{-1} p_{v,r(\gamma)}  \gamma^*$, by Lemma~\ref{lemma:mu*lambda} and Lemma~\ref{computewithpsubw} we get 
\[k_\gamma^{-1} p_{v,r(\gamma)}  \gamma^*\mathfrak p = p_{v,r(\gamma)}p_{v,r(\gamma)}^* = v\in Rv.\qedhere \]
\end{proof}

\medskip

\subsection*{Third step}
We now have all the pieces in place  to establish the main result of the article.  

\begin{theorem}\label{injhulltheorem} Let $E$ be any graph, and $K$ any field.    Let $v$ be a line point in  $E$. 
Then the formal series left $L_K(E)$-module $X_v^E:= K[[F^+_E(T(v))]]^{sf}$  is the injective envelope of the simple left $L_K(E)$-ideal $L_K(E)v$. 
\end{theorem}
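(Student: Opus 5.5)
Since Lemma~\ref{L(E)vessentialinMv} already shows that $Rv$ is essential in $X_v^E$, it remains only to prove that $X_v^E$ is injective. I will do this with Proposition~\ref{prop:Baire}, taking as the fixed left ideal $I := I(v)$, the two-sided ideal generated by $v$. The proof then splits into two verifications.

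\emph{(a) Left ideals $X\leq I(v)$.} By Lemma~\ref{lemma:semisimple}, $I(v)$ is a semisimple left module. Hence every left ideal $X\leq I(v)$ is a direct summand of $I(v)$, and any $f:X\to X_v^E$ extends to $I(v)$ by composing with a projection onto $X$. This step is immediate.

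\emph{(b) Left ideals $Y\geq I(v)$.} Here the key reduction is Lemma~\ref{lemma:restriction}, applied with $I=I(v)\leq Y$. I first check that the restriction map $\Hom_R(Y,X_v^E)\to\Hom_R(I(v),X_v^E)$ is injective. Suppose $g:Y\to X_v^E$ vanishes on $I(v)$, and let $y\in Y$. For any $\mu\in F^+_E(T(v))$, the element $p_{v,r(\mu)}\mu^* y$ lies in $I(v)$, because $p_{v,r(\mu)}$ has source $v$. So
\[
p_{v,r(\mu)}\mu^*g(y)=g(p_{v,r(\mu)}\mu^* y)=0 .
\]
Using the explicit action (as in the proof of Lemma~\ref{L(E)vessentialinMv}), $p_{v,r(\mu)}\mu^*$ applied to a series picks out its $\mu$-coefficient times $v$. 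Therefore every coefficient of $g(y)$ is zero, so $g(y)=0$. It then suffices to show that every $h:I(v)\to X_v^E$ extends to all of $R$.

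By Proposition~\ref{lemma:Rmu*} and Lemma~\ref{lemma:semisimple},
\[
I(v)=\bigoplus_{\mu\in F^+_E(T(v))}R\mu^*,
\]
and each summand is isomorphic to $Rv$ via $\ell\mu^*\mapsto \ell\mu^*\mu p_{v,r(\mu)}$. A homomorphism $Rv\to X_v^E$ is determined by the image $\mathfrak q=v\cdot\mathfrak q$ of $v$, so $h$ amounts to a family $\mathfrak q_\mu := h(p_{v,r(\mu)}\mu^*)$ with $v\mathfrak q_\mu=\mathfrak q_\mu$. Since $v\cdot$ kills every term with source $\neq v$, each $\mathfrak q_\mu$ is a formal series supported on paths with source $v$. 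Because $v$ is a line point and $T(v)$ is hereditary, the only element of $F^+_E(T(v))$ with source $v$ is $v$ itself. Hence $\mathfrak q_\mu=c_\mu v$ for scalars $c_\mu\in K$, and
\[
h(\mu^*)=h(\mu p^*_{v,r(\mu)}\cdot p_{v,r(\mu)}\mu^*)=c_\mu\,\mu p^*_{v,r(\mu)} .
\]

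I then define the candidate extension
\[
\mathfrak m:=\sum_{\mu\in F^+_E(T(v))}c_\mu\,\mu p^*_{v,r(\mu)}\in K[[F^+_E(T(v))]],
\]
which need not be source finite. The extension is $\hat h:R\to X_v^E$, $r\mapsto r\cdot\mathfrak m$. This lands in $X_v^E$ by Proposition~\ref{prop:formalisRmod}, since $R\cdot K[[F^+_E(T(v))]]=X_v^E$. It is clearly an $R$-homomorphism. It extends $h$ because, by Lemma~\ref{lemma:mu*lambda}, $\mu^*\cdot\mathfrak m=c_\mu\,p^*_{v,r(\mu)}$. This last equality is exactly where the explicit termwise description of the premodule action on infinite series from the Appendix must be used, and I expect making it rigorous to be the main obstacle. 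The point that needs care is that $\mu^*$ acting on a possibly infinite sum annihilates all terms $\lambda\neq\mu$ with $\lambda\in F^+_E(T(v))$; this holds because each $\mu^*$ is a product of finitely many $S_{e^*}$, each acting termwise.

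With (a) and (b) established, Proposition~\ref{prop:Baire} gives that $X_v^E$ is injective, and essentiality finishes the proof that $X_v^E$ is the injective envelope of $Rv$.
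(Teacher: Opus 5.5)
Your proposal is the paper's proof step for step. It uses Proposition~\ref{prop:Baire} with $I=I(v)$, semisimplicity of $I(v)$ for left ideals inside it, and injectivity of the restriction map by extracting coefficients with elements of $I(v)$. It then extends maps on $I(v)$ via $r\mapsto r\cdot\mathfrak m$ for a possibly non-source-finite series $\mathfrak m$. Normalizing at $p_{v,r(\mu)}\mu^*$, whose image lies in $v\,X_v^E=Kv$, rather than at $\mu^*$, whose image lies in $r(\mu)X_v^E=Kp^*_{v,r(\mu)}$ as in the paper, is only a cosmetic variation.

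One displayed computation is wrong as written and must be corrected.
\begin{itemize}
\item The factorization fails: $\mu p^*_{v,r(\mu)}\cdot p_{v,r(\mu)}\mu^*=\mu\,r(\mu)\,\mu^*=\mu\mu^*\neq\mu^*$.
\item The value $c_\mu\,\mu p^*_{v,r(\mu)}$ is wrong. When $\mu\notin E^0$ it is annihilated by $r(\mu)$, whereas $h(\mu^*)=r(\mu)\cdot h(\mu^*)$.
\item The correct factorization is $\mu^*=p^*_{v,r(\mu)}\cdot p_{v,r(\mu)}\mu^*$. It gives $h(\mu^*)=p^*_{v,r(\mu)}\cdot c_\mu v=c_\mu\,p^*_{v,r(\mu)}$, which is exactly the value your final check $\mu^*\cdot\mathfrak m=c_\mu\,p^*_{v,r(\mu)}$ needs.
\item Likewise, the isomorphism $R\mu^*\to Rv$ is right multiplication by $\mu p^*_{v,r(\mu)}$, not by $\mu p_{v,r(\mu)}$ (note $\mu^*\mu p_{v,r(\mu)}=r(\mu)p_{v,r(\mu)}=0$ unless $r(\mu)=v$). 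Its inverse is right multiplication by $p_{v,r(\mu)}\mu^*$.
\end{itemize}
With these fixes the argument is complete.
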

\begin{proof} We continue to denote $L_K(E)$ by $R$.   By Lemma~\ref{L(E)vessentialinMv} we know that $Rv$ is essential in $X_v^E$.  Thus we need only establish that $X_v^E$ is an injective left $R$-module.   We use  Proposition~\ref{prop:Baire}, applied here to the one-sided ideal structure of the two-sided ideal $I(v)$ of $R$ generated by the vertex $v$.   So it suffices to show that:

\smallskip

  (1) If $C$ is a left ideal of $R$ contained in $I(v)$ then any $R$-homomorphism from $C$ to $X_v^E$ extends to an $R$-homomorphism from $I(v)$ to $X_v^E$, and 
  
  (2) if $D$ is any left ideal of $R$ containing $I(v)$ then any $R$-homomorphism from $D$ to $X_v^E$ extends to an $R$-homomorphism from $R$ to $X_v^E$.  

\smallskip

But (1) is immediate since, by Lemma~\ref{lemma:semisimple},  $I(v)$ is semisimple as a left $R$-module, so any left $R$-submodule of $I(v)$  is a direct summand of $I(v)$, and so any such map extends trivially.

We establish (2) in two steps as follows. 

First, we show that the result holds for $D = I(v)$.  By Lemma~\ref{lemma:semisimple}, $I(v)$ coincides with $I(T(v))$. By Proposition~\ref{lemma:Rmu*}, any $R$-homomorphism $\varphi:I(v)\to X_v^E$ is uniquely determined by the values of $\varphi(\mu^*)$, where $\mu \in F^+_E(T(v))$.  Since $\varphi(\mu^*)=\varphi(r(\mu)\mu^*)=r(\mu)\varphi(\mu^*)$, and $r(\mu)\in F^+_E(T(v))$, by Lemma~\ref{lemma:mu*lambda} we have 
\[\varphi(\mu^*)=k_\mu p_{v,r(\mu)}^*\]
for suitable $k_\mu\in K$.
 Now, define $\epsilon_\varphi \in K[[F^+_E(T(v))]]$ by setting  
$$\epsilon_\varphi := \sum_{\lambda \in F^+_E(T(v))} k_\lambda \lambda p^*_{v,r(\lambda)}.$$

By Proposition \ref{prop:formalisRmod}, 
%using the previously established result $R K[[F^+_E(T(v))]] = X_v^E$, 
% the right product by $\epsilon_\varphi$ defines an $R$-homomorphism
\[\theta_{\epsilon_\varphi}: R \to X_v^E,\quad r\mapsto r\cdot \epsilon_\varphi\]
is an $R$-homomorphism.
%For each $\hat r\in R$, there exists a finite number of vertices $V_{\hat r}\subseteq E^0$ such that $\sum_{u\in V_{\hat r}}u\cdot\hat r=\hat r$. Therefore, also if $\epsilon_\varphi$ belongs to the non unital left $R$-module $K[[F^+_E(T(v))]]$, we have $\hat r\cdot \epsilon_\varphi \in K[[F^+_E(T(v))]]^{sf}$. Thus we can define $\theta: R \to K[[F^+_E(T(v))]]^{sf}$ via right multiplication by $\epsilon_\varphi$. }
To show that $\theta_{\epsilon_\varphi}$ extends $\varphi$, it suffices to show that $\varphi(\mu^*) = \mu^* \cdot \epsilon_\varphi$ for each  $\mu \in F^+_E(T(v))$.   But again invoking Lemma~\ref{lemma:mu*lambda} we get 
\[\mu^* \cdot \epsilon_\varphi=\mu^*\sum_{\lambda \in F^+_E(T(v))} k_\lambda \lambda p^*_{v,r(\lambda)}=k_\mu p^*_{v,r(\mu)},\]
which  is precisely $\varphi(\mu^*)$.

For the second step,  let  $\psi: D\to X_v^E$ be a homomorphism of left $R$-modules, where $D$ is a left ideal of $R$ containing $I(v)$.   We  prove that the restriction map $\Hom_R(D, X_v^E)\to \Hom_R(I(v), X_v^E)$ is injective; the desired conclusion then follows  by invoking   Lemma~\ref{lemma:restriction}. Assume to the contrary that $\psi_1,\psi_2: D \to X_v^E$ are two different homomorphisms   with $\psi_1(x)=\psi_2(x)$ for all $x\in I(v)$. Since $\psi_1\not=\psi_2$, there exists $y\in D$ such that $\psi_1(y) \not= \psi_2(y)$ in $X_v^E$. Set
\[\psi_1(y)= \sum_{\lambda \in F^+_E(T(v))} k_\lambda \lambda p^*_{v,r(\lambda)}\quad\text{and}\quad \psi_2(y) =
 \sum_{\lambda \in F^+_E(T(v))}h_\lambda \lambda p^*_{v,r(\lambda)}.\]
 So, there exists $\gamma \in F^+_E(T(v))$ such that $k_{\gamma}\not= h_\gamma$, and thus 
\[k_\gamma  p_{v,r(\gamma)}^* \not= h_\gamma  p_{v,r(\gamma)}^*.\]
But by Lemma~\ref{lemma:mu*lambda} we have $\gamma^* \cdot\psi_1(y)  =   k_\gamma  p_{v,r(\gamma)}^*$, and similarly $\gamma^* \cdot \psi_2(y) =   h_\gamma  p_{v,r(\gamma)}^*$.  Since $\gamma^* \in I(v)$ we have  $\gamma^* y \in I(v)$. But then 
 $$k_\gamma p_{v,r(\gamma)}^* = \gamma^*  \cdot \psi_1(y) =  \psi_1(\gamma^* y)= \psi_2(\gamma^* y)=\gamma^* \cdot \psi_2(y) = h_\gamma p_{v,r(\gamma)}^* ,$$
 contrary to the displayed inequality.  
 
 Having established properties (1) and (2) we conclude by Proposition~\ref{prop:Baire} that       $X_v^E$ is injective.  So we have shown both that $Rv$ is an essential $R$-submodule of the left $R$-module $X_v^E$, and that $X_v^E$ is injective, thereby yielding that $X_v^E:= K[[F^+_E(T(v))]]^{sf}$ is the injective envelope of $Rv$. 
 \end{proof}

 %\begin{corollary}\label{cor:injhullofanysimpleleftideal}
%Let $E$ be any graph and $K$ any field.  Let $R$ denote $L_K(E)$. Let $N$ be any minimal left ideal of $R$.   Then the injective hull of $N$ 

 %\end{corollary}

As noted previously, any sink $w$ in a graph $E$ is necessarily a line point.  When $w$ is a sink, it is clear that $F^+_E(T(w)) = \Path(E)w$ and for any $\mu\in F^+_E(T(w))$ one has $p^*_{w,r(\mu)}=p^*_{w,w}=w$.  Theorem~\ref{injhulltheorem} then yields the following.

\begin{corollary}\label{sinkcorollary}
Let $E$ be an arbitrary graph, and $K$ any field.  Let  $w$ be a sink in $E$.  %Then $T(w)=w$, and 
Then the injective envelope of the simple left $L_K(E)$-ideal $L_K(E)w$ is the formal series left $L_K(E)$-module
$$X_w^E:= K[[F^+_E(w)]]^{sf}  \ = \  \{ \sum_{\lambda \in \Path(E)w} k_\lambda \lambda \ | \ k_\lambda\in K, \{s(\lambda): k_\lambda\not=0\} \ \mbox{is finite}\}.$$
%
%
%$K[[F^+_E(T(w))]]^{sf}$ of the formal series \[\sum_{\lambda \in \Path(E)w} k_\lambda \lambda\]
%such that $\{s(\lambda): k_\lambda\not=0\}$ is finite.
\end{corollary}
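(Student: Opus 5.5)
The corollary is a direct specialization of Theorem~\ref{injhulltheorem} to the line point $w$. The work is to identify the index set $F^+_E(T(w))$ and the placeholders $\mu p^*_{w,r(\mu)}$ concretely. After that, the source-finiteness condition of Definition~\ref{def:sourcefinite} translates literally.

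First, since $w$ is a sink, no edge has source $w$, so the only path starting at $w$ is the trivial path $w$. Hence $T(w)=\{w\}$, and $w$ is vacuously a line point: there are no bifurcations or cycles at any vertex of $T(w)$. Theorem~\ref{injhulltheorem} therefore applies and gives that $X_w^E=K[[F^+_E(T(w))]]^{sf}$ is the injective envelope of $L_K(E)w$.

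Next, I compute $F^+_E(\{w\})$ from Definition~\ref{def:estesa}. It consists of $w$ itself together with the paths $\alpha=e_1\cdots e_n$ such that $r(e_n)=w$, $s(e_1)\neq w$, and $r(e_i)\neq w$ for $i<n$. The condition $s(e_1)\neq w$ is automatic, since $w$ emits no edges. The condition $r(e_i)\neq w$ for $i<n$ is also automatic, because $r(e_i)=s(e_{i+1})$ and $w$ is the source of no edge. Thus $F^+_E(T(w))$ is exactly the set of paths ending at $w$, namely $\Path(E)w$. For every such $\mu$ we have $r(\mu)=w$, so $p_{w,r(\mu)}=p_{w,w}=w$ is the trivial path. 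Consequently the placeholder $\mu p^*_{w,r(\mu)}=\mu w=\mu$.

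Finally, a formal series in Definition~\ref{Mvdef} then reads $\sum_{\lambda\in\Path(E)w}k_\lambda\lambda$. It is source finite precisely when $\{s(\lambda):k_\lambda\neq 0\}$ is finite, which gives the displayed description. There is no real obstacle here; the only point needing care is checking that the interior-vertex condition in $F^+_E$ imposes nothing extra, and this holds because $w$ emits no edges.
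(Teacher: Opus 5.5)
Your proposal is correct and follows the paper's own argument: specialize Theorem~\ref{injhulltheorem} to the sink $w$, observe that $T(w)=\{w\}$ and $F^+_E(T(w))=\Path(E)w$, and note that $p^*_{w,r(\mu)}=w$ so each placeholder $\mu p^*_{w,r(\mu)}$ is just $\mu$. Your explicit check that the interior-vertex condition in Definition~\ref{def:estesa} is automatic (because $w$ emits no edges) fills in the step the paper calls ``clear''.
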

In particular, Corollary~\ref{sinkcorollary} generalizes, to all graphs and all sinks within these graphs, the ``formal series''  description of the injective envelope of the simple left $L_K(\mathcal{T})$-ideal $L_K(\mathcal{T})w$ given in \cite{AMT21}, where $\mathcal{T}$ is the Toeplitz graph and $w$ is the unique sink in $\mathcal{T}$.  Indeed, the corollary generalizes the description of  the injective envelope of the simple left ideal generated by a sink in any Leavitt path algebra associated to a finite graph with disjoint cycles given in \cite{AMT24}.

\medskip
We conclude the article by applying our main result to get a description of the injective envelope of simple left $L_K(E)$-modules associated with infinite emitters in $E$.

We recall that if $H$ is any hereditary subset  of $E^0$, a vertex $u\in E^0$ is called a \emph{breaking vertex of} $H$ \cite[Definition~2.4.4]{AAM} if $u$ is in the set
\[B_H:=\{u\in E^0\setminus H\mid u\in {\rm Inf}(E)\text{ and }0<|s^{-1}(u)\cap r^{-1}(E^0\setminus H)|<\infty\}. \]
For $u\in B_{H}$ we define the element $u^H$ of $L_K(E)$ by setting
\[
u^H:=u-\sum_{e\in s^{-1}(u)\cap r^{-1}(E^0\setminus H)}ee^*.
\]
For any subset $S\subseteq B_H$, we define $S^H\subseteq L_K(E)$ by setting $S^H=\{u^H\mid u\in S\}$.

In \cite{AR14},  
%Ara and Rangaswamy  produced a simple $L_K(E)$-module associated to each infinite emitter $v$ in $E$. The form of this simple module associated to $v$ depends on whether $v$ is or is not a breaking vertex for the hereditary and saturated set 
%\[H_v:=E^0\setminus \{u\in E^0\mid u\geq v\}.\]
%in $E$.
%This has to be changed: Ara and Ranga do not consider the case in wich there are infinitely many arrows starting in $v$ and ending outside $H_v$. We have to write:  
Ara and Rangaswamy  produced  simple $L_K(E)$-modules associated to two types of infinite emitters $v$ in $E$:  first, those for which $v$ is a breaking vertex for the hereditary and saturated set 
\[H_v:=E^0\setminus \{u\in E^0\mid u\geq v\}.\]
in $E$ (i.e., $v\in B_{H_v}$),  and second, those for which $r(s^{-1}(v))\subseteq H_v$.

\begin{corollary}\label{infemitterCor}
Let $v$ be an infinite emitter in $E$ of one of the two types described above.  Then we have an explicit description of the injective envelope of the simple module associated to $v$.
%presented in \cite{AR14}.
\end{corollary}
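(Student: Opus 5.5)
The plan is to realize each of the two Ara--Rangaswamy simple modules as a simple left ideal $L_K(F)w$ of a quotient algebra $L_K(F)\cong L_K(E)/I$, where $w$ is a line point (indeed a sink) of a suitable graph $F$. Theorem~\ref{injhulltheorem} (or Corollary~\ref{sinkcorollary}) then gives the injective envelope in $L_K(F)$-Mod, and Lemma~\ref{lemma:injectivelocalunits} lifts it to $L_K(E)$-Mod. Throughout, $H_v=E^0\setminus\{u\mid u\geq v\}$ is hereditary and saturated.

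\emph{Second type:} $r(s^{-1}(v))\subseteq H_v$. Take $I=I(H_v)$, the ideal generated by $H_v$. By \cite[Theorem 2.4.15]{AAM} (the quotient by a graded ideal $I(H\cup S^H)$ is the Leavitt path algebra of the quotient graph), $L_K(E)/I(H_v)\cong L_K(E/H_v)$. Here $E/H_v$ has vertices $E^0\setminus H_v$ and we keep only edges with range outside $H_v$. Since every edge out of $v$ lands in $H_v$, $v$ is a sink in $E/H_v$, and the Ara--Rangaswamy module is (isomorphic to) $L_K(E/H_v)v$, viewed as an $L_K(E)$-module. The ideal $I(H_v)$ has local units: finite sums of vertices of $H_v$ together with elements of the form $\sum\mu\mu^*$ built from Proposition~\ref{lemma:Rmu*}, or directly \cite[Lemma~2.4.1]{AAM}. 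So Lemma~\ref{lemma:injectivelocalunits} applies, and the injective envelope is $X_v^{E/H_v}$, the source-finite formal series in paths of $E/H_v$ ending at $v$, i.e.\ paths in $E$ ending at $v$ all of whose vertices are $\geq v$.

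\emph{First type:} $v\in B_{H_v}$. Take instead $I=I(H_v\cup\{v^{H_v}\})$. By \cite[Theorem 2.4.15]{AAM}, $L_K(E)/I\cong L_K(F)$, where $F=(E/H_v)$ augmented by the breaking-vertex construction: $v$ splits into $v$ and a new vertex $v'$, where $v'$ is a sink receiving copies of the edges into $v$. The Ara--Rangaswamy simple module corresponds to the class of $v^{H_v}$, i.e.\ to $L_K(F)v'$ with $v'$ a sink. Again we check that $I$ has local units, which holds because graded ideals of Leavitt path algebras are generated by idempotents and are themselves Leavitt path algebras of suitable graphs with local units. Corollary~\ref{sinkcorollary} then gives the envelope as $X_{v'}^F$, the source-finite formal series in paths of $F$ ending at $v'$. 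Translated back, these are series in the elements $\lambda v^{H_v}$ with $\lambda$ a path of $E$ ending at $v$ through vertices $\geq v$.

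The main obstacle is identifying the Ara--Rangaswamy modules precisely with $L_K(F)w$ for these quotient graphs, especially in the breaking-vertex case. I would do this by exhibiting a generator annihilated exactly by the relevant ideal and comparing with the description in \cite{AR14}. The local units hypothesis for $I$ is the other point to check. After that, the result follows formally from the theorem and lemmas above.
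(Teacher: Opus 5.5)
\textbf{Gap in the first type.} Your overall route is the paper's: quotient by a graded ideal, identify the quotient with $L_K$ of a quotient graph in which the relevant vertex is a sink, apply Corollary~\ref{sinkcorollary}, and lift with Lemma~\ref{lemma:injectivelocalunits}. But the first-type case fails as written, because you put $v^{H_v}$ \emph{into} the ideal.

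In \cite[Theorem~2.4.15]{AAM}, $L_K(E)/I(H\cup S^H)\cong L_K(E/(H,S))$, and primed vertices are added only for $u\in B_H\setminus S$. Under this isomorphism $u^H\mapsto u'$ for $u\in B_H\setminus S$, while $u^H\mapsto 0$ for $u\in S$. Your ideal $I(H_v\cup\{v^{H_v}\})$ is the case $S=\{v\}$, so:
\begin{itemize}
\item the class of $v^{H_v}$ is zero;
\item the graph $E/(H_v,\{v\})$ contains no $v'$;
\item in that graph $v$ is a regular vertex lying on a closed path. Indeed, an edge $e$ from $v$ with $r(e)\notin H_v$ has $r(e)\geq v$, and every vertex on a path from $r(e)$ back to $v$ lies outside $H_v$.
\end{itemize}
So there is no sink to which Corollary~\ref{sinkcorollary} applies. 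Moreover, the simple module acts via $v'$, so $v^{H_v}$ acts on it nontrivially; hence it is not even a module over your quotient.

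The graph $F$ you describe is the right one, but it is the quotient by an ideal that omits $v^{H_v}$. The paper takes $S=B_{H_v}\setminus\{v\}$ and $I(H_v\cup S^{H_v})$. Then $B_{H_v}\setminus S=\{v\}$, and exactly one new sink $v'$ appears, namely the image of $v^{H_v}$. The choice $S=\emptyset$, i.e.\ $I(H_v)$, would also work: it adds further sinks $u'$ for the other breaking vertices, but none of them lies on a path ending at $v'$.

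\textbf{Slip in the second type.} Here you make the opposite error. In fact $L_K(E)/I(H_v)\cong L_K(E/(H_v,\emptyset))$, which contains a sink $u'$ for every $u\in B_{H_v}$. This equals $L_K(E/H_v)$ only if $B_{H_v}=\emptyset$, which can fail even though $v\notin B_{H_v}$; in the paper's example, $u_1\in B_{H_{u_2}}$. The paper quotients by $I(H_v\cup B_{H_v}^{H_v})$ precisely to obtain $E/H_v$. Your conclusion survives, since $v$ remains a sink and the extra sinks $u'$ lie on no path into $v$. Thus the set of paths ending at $v$, and hence the formal series module, is unchanged. But the isomorphism as you state it is false.

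\textbf{Points you flagged.} The paper does not check the local-units hypothesis of Lemma~\ref{lemma:injectivelocalunits}. Nor does it compare with \cite{AR14} beyond what you sketch; it simply takes $L_K(E/(H_v,S))v'$ and $L_K(E/H_v)v$ as the simple modules in question.
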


\begin{proof}

\underline{First type:}   Assume $v\in B_{H_v}$. Let $S:=B_{H_v}\setminus \{v\}$. Denote by $I(H_v, S^{H_v})$ the two-sided ideal of $L_K(E)$ generated by the subset $H_v\cup S^{H_v}\subseteq L_K(E)$.  By \cite[Theorem 2.4.15]{AAM} the quotient $L_K(E)/I(H_v, S^{H_v})$ is isomorphic to the Leavitt path algebra $L_K(E/(H_v,S))$ associated to the graph $E/(H_v,S)$, where $E/(H_v,S)$ is defined by setting
\begin{align*}
    \big(E/ (H_v,S)\big)^0&:=(E^0\setminus H_v)\sqcup \{v'\},\\
     \big(E/(H_v,S)\big)^1&:=\{e\in E^1\mid r(e)\notin H_v\}\sqcup \{e'\mid e\in E^1, r(e)=v\},
\end{align*}
extending the source and range maps in $E$ when appropriate, and in addition  setting $s(e')=s(e)$, $r(e')=v'$.
The vertex $v'$ is a sink in $E/(H_v,S)$. By Corollary~\ref{sinkcorollary}, the formal series left $L_K(E/(H_v,S))$-module 
$$X_{v'}^{E/(H_v,S)} := K[[F^+_{E/(H_v,S)}(v')]]^{sf}$$ is the injective envelope of the simple left $L_K(E/(H_v,S))$-module $\big(L_K(E/(H_v,S))\big)v'$. Through the projection
\[
L_K(E)\to L_K(E)/I(H_v, S^{H_v})\cong L_K(E/(H_v,S))
\]
both $X_{v'}^{E/(H_v,S)}$ and $\big(L_K(E/(H_v,S))\big)v'$ are also left $L_K(E)$-modules. By Lemma~\ref{lemma:injectivelocalunits}, $X_{v'}^{E/(H_v,S)}$ is the injective envelope of the simple $L_K(E)$-module $\big(L_K(E/(H_v,S))\big)v'$ in  $L_K(E)\Mod$.

\medskip

\underline{Second type:}  
%Assume $v\notin B_{H_v}$. This is not correct:  We have to write
Assume $r(s^{-1}(v))\subseteq H_v$. By \cite[Theorem 2.4.15]{AAM} the quotient $L_K(E)/I(H_v, B_{H_v}^{H_v})$ is isomorphic to the Leavitt path algebra $L_K(E/(H_v,B_{H_v}))=L_K(E/H_v)$ associated to the graph $E/(H_v,B_{H_v})=E/H_v$, where $E/H_v$ is defined by setting
\begin{align*}
    \big(E/H_v\big)^0&:=(E^0\setminus H_v),\\
     \big(E/H_v\big)^1&:=\{e\in E^1\mid r(e)\notin H_v\},
\end{align*}
and restricting the source and range maps in $E$ to $E/H_v$. 
The vertex $v$ is a sink in $E/H_v$. By Corollary~\ref{sinkcorollary}, the formal series left $L_K(E/H_v)$-module 
$$X_v^{E/H_v}:= K[[F^+_{E/H_v}(v)]]^{sf}$$ is the injective envelope of the simple left $L_K(E/H_v)$-module $\big(L_K(E/H_v)\big)v$. Through the projection
\[
L_K(E)\to L_K(E)/I(H_v, B_{H_v}^{H_v})\cong L_K(E/H_v)
\]
both $X_v^{E/H_v}$ and $\big(L_K(E/H_v)\big)v$ are also left $L_K(E)$-modules. By Lemma~\ref{lemma:injectivelocalunits}, $X_v^{E/H_v}$ is the injective envelope of the simple $L_K(E)$-module $\big(L_K(E/H_v)\big)v$ in  $L_K(E)\Mod$.
\end{proof}

%\R{AAA: Look at the following example}

\begin{example}
Consider the following graph $E$

\begin{center}
\begin{tikzpicture}[
  >=Stealth,
  line width=0.9pt,
  vertex/.style={inner sep=1.2pt,outer sep=1pt},
  bigloopabove/.style={loop above,out=130,in=50,min distance=15mm,looseness=4},
  bigloopbelow/.style={loop below,out=230,in=310,min distance=15mm,looseness=4},
  edgelabel/.style={midway,fill=white,inner sep=1.2pt,font=\footnotesize}
]

% Vertici principali u_i e w
\node[vertex] (u3) at (3.60,6.15) {$u_3$};
\node[vertex] (u2) at (3.60,4.70) {$u_2$};
\node[vertex] (u1) at (3.60,3.25) {$u_1$};
\node[vertex] (w)  at (7.25,4.55) {$w$};

% Famiglia z_{-1}, z_{-2}, ..., z_{-n} associata a u_2
\node[vertex] (zm1) at (0.45,4.35) {$z_{-1}$};
\node[vertex] (zm2) at (0.45,4.95) {$z_{-2}$};
\node at (0.45,5.43) {$\vdots$};
\node[vertex] (zmn) at (0.45,5.95) {$z_{-n}$};
\node at (0.45,6.45) {$\vdots$};

% Famiglia z_1, z_2, ..., z_n associata a u_1
\node[vertex] (z1) at (0.45,3.55) {$z_1$};
\node[vertex] (z2) at (0.45,2.95) {$z_2$};
\node at (0.45,2.47) {$\vdots$};
\node[vertex] (zn) at (0.45,1.92) {$z_n$};
\node at (0.45,1.42) {$\vdots$};

% Frecce h_i da u_2 verso i vertici z_{-i}
\draw[->] (u2) to[bend right=3]
  node[edgelabel,pos=0.47,sloped] {$h_1$} (zm1);
\draw[->] (u2) to[bend right=8]
  node[edgelabel,pos=0.49,sloped] {$h_2$} (zm2);
\draw[->] (u2) to[bend right=18]
  node[edgelabel,pos=0.58,sloped] {$h_n$} (zmn);

% Puntini nella famiglia delle frecce h_i
\node[fill=white,inner sep=0.7pt] at (1.65,5.42) {$\vdots$};
\node[fill=white,inner sep=0.7pt] at (1.65,6.25) {$\vdots$};%era 6.05

% Frecce g_i da u_1 verso i vertici z_i
\draw[->] (u1) to[bend left=3]
  node[edgelabel,pos=0.47,sloped] {$g_1$} (z1);
\draw[->] (u1) to[bend left=8]
  node[edgelabel,pos=0.49,sloped] {$g_2$} (z2);
\draw[->] (u1) to[bend left=18]
  node[edgelabel,pos=0.58,sloped] {$g_n$} (zn);

% Puntini nella famiglia delle frecce g_i
\node[fill=white,inner sep=0.7pt] at (1.65,2.62) {$\vdots$};%era 2.42
\node[fill=white,inner sep=0.7pt] at (1.65,1.68) {$\vdots$};%era 1.78

% Frecce d_1,d_2,d_3 verso w
\draw[->] (u1) to[bend left=8]
  node[edgelabel,pos=0.53] {$d_1$} (w);
\draw[->] (u2) to[bend left=2]
  node[edgelabel,pos=0.51] {$d_2$} (w);
\draw[->] (u3) to[bend right=8]
  node[edgelabel,pos=0.51] {$d_3$} (w);
  
  %Frecce m_1, m_2, tra le u_i
   \draw[->] (u1) -- 
  node[edgelabel,pos=0.48,font=\footnotesize]
  {$m_1$} (u2);
  
   \draw[->] (u2) -- 
  node[edgelabel,pos=0.48,font=\footnotesize]
  {$m_2$} (u3);

% Catena v_0 -> v_1 -> v_2 -> v_3 -> ...
\node[vertex] (v0) at (3.00,0.55) {$v_0$};
\node[vertex] (v1) at (4.35,0.55) {$v_1$};
\node[vertex] (v2) at (5.70,0.55) {$v_2$};
\node[vertex] (v3) at (7.05,0.55) {$v_3$};
\coordinate (vr) at (8.15,0.55);

\draw[->] (v0) -- 
  node[pos=0.50,above,font=\footnotesize] {$e_0$} (v1);

\draw[->] (v1) -- 
  node[pos=0.50,above,font=\footnotesize] {$e_1$} (v2);

\draw[->] (v2) -- 
  node[pos=0.50,above,font=\footnotesize] {$e_2$} (v3);

\draw[->] (v3) -- 
  node[pos=0.50,above,font=\footnotesize] {$e_3$} (vr);
%\draw[->] (v0) -- node[edgelabel] {$e_0$} (v1);
%\draw[->] (v1) -- node[edgelabel] {$e_1$} (v2);
%\draw[->] (v2) -- node[edgelabel] {$e_2$} (v3);
%\draw[->] (v3) -- node[edgelabel] {$e_3$} (vr);
\node[right] at (vr) {$\cdots$};

% Due frecce opposte tra u_1 e v_0
\draw[->] (u1) to[bend right=18]
  node[edgelabel,pos=0.48,sloped] {$\ell_1$} (v0);
\draw[->] (v0) to[bend right=18]
  node[edgelabel,pos=0.48,sloped] {$\ell_0$} (u1);

% Vertice z con due loop alpha, beta e freccia f verso v_1
\node[vertex] (z) at (1.20,-1.00) {$z$};
\draw[->] (z) edge[bigloopabove]
  node[edgelabel] {$\alpha$} (z);
\draw[->] (z) edge[bigloopbelow]
  node[edgelabel] {$\beta$} (z);
\draw[->] (z) to[bend left=8]
  node[edgelabel,pos=0.50,sloped] {$f$} (v1);

% Vertice x e freccia g verso v_2
\node[vertex] (x) at (4.20,-1.75) {$x$};
\draw[->] (x) to[bend left=8]
  node[edgelabel,pos=0.50,sloped] {$g$} (v2);

\end{tikzpicture}
\end{center}
In the graph $E$ consider
\begin{itemize}
    \item the \emph{infinite} line point $v_1$;
    \item the \emph{finite} line point $u_3$;
    \item the sink $w$;
    \item the infinite emitter $u_1$ belonging to the set $B_{H_{u_1}}$ of breaking vertices of the hereditary saturated subset $H_{u_1}$ of $E^0$;
    \item the infinite emitter $u_2$ such that $r(s^{-1}(u_2))$ is contained in the hereditary and saturated subset $H_{u_2}$ of $E^0$.
\end{itemize}
Let $S_1:= B_{H_{u_1}}\setminus\{u_1\}$ and 
$S_2:=B_{H_{u_2}}$. 
Consider the graphs
\begin{center}
\begin{tikzpicture}[
  >=Stealth,
  line width=0.9pt,
  vertex/.style={inner sep=1.2pt,outer sep=1pt},
  bigloopabove/.style={loop above,out=130,in=50,min distance=15mm,looseness=4},
  bigloopbelow/.style={loop below,out=230,in=310,min distance=15mm,looseness=4},
  edgelabel/.style={midway,fill=white,inner sep=1.2pt,font=\footnotesize}
]

% Vertici principali u_i e w
%\node[vertex] (u3) at (3.60,5.70) {$u_3$};
%\node[vertex] (u2) at (3.60,4.70) {$u_2$};
\node[vertex] (F1)  at (1.10,2.55) {$F_1:=$};
\node[vertex] (u1) at (2.60,3.25) {$u_1$};
\node[vertex] (w)  at (5.25,3.25) {$u_1'$};
\node[vertex] (v0) at (2.00,0.55) {$v_0$};

% Due frecce opposte tra u_1 e v_0
\draw[->] (u1) to[bend right=18]
  node[edgelabel,pos=0.48,sloped] {$\ell_1$} (v0);
\draw[->] (v0) to[bend right=18]
  node[edgelabel,pos=0.48,sloped] {$\ell_0$} (u1);
  
  \draw[->] (v0) to[bend left=8]
  node[edgelabel,pos=0.53] {$\ell_0'$} (w);

\end{tikzpicture}
\quad
\begin{tikzpicture}[
  >=Stealth,
  line width=0.9pt,
  vertex/.style={inner sep=1.2pt,outer sep=1pt},
  bigloopabove/.style={loop above,out=130,in=50,min distance=15mm,looseness=4},
  bigloopbelow/.style={loop below,out=230,in=310,min distance=15mm,looseness=4},
  edgelabel/.style={midway,fill=white,inner sep=1.2pt,font=\footnotesize}
]

% Vertici principali u_i e w
%\node[vertex] (u3) at (3.60,5.70) {$u_3$};
%\node[vertex] (u2) at (3.60,4.70) {$u_2$};
\node[vertex] (F2)  at (1.10,2.55) {$F_2:=$};
\node[vertex] (u1) at (3.60,3.25) {$u_1$};
\node[vertex] (v0) at (3.00,0.55) {$v_0$};
\node[vertex] (u2) at (3.60,4.70) {$u_2$};

% Due frecce opposte tra u_1 e v_0
\draw[->] (u1) to[bend right=18]
  node[edgelabel,pos=0.48,sloped] {$\ell_1$} (v0);
\draw[->] (v0) to[bend right=18]
  node[edgelabel,pos=0.48,sloped] {$\ell_0$} (u1);
  
%  \draw[->] (u1) -- 
%  node[pos=0.50,above,font=\footnotesize] {$m_1$} (u2);
  
%  \draw[->] (u1) to[bend right=18]
%  node[edgelabel,pos=0.48,sloped] {$\ell_1$} (u2);
% 
 \draw[->] (u1) -- 
  node[edgelabel,pos=0.48,font=\footnotesize]
  {$m_1$} (u2);

\end{tikzpicture}
\end{center}
Then 
$L_K(F_1)\cong L_K(E)/I(H_{u_1}, S_1^{H_{u_1}})$ and $L_K(F_2)\cong L_K(E)/I(H_{u_2}, S_2^{H_{u_2}})$. Each left $L_K(F_1)$-module and each left $L_K(F_2)$-module are also left $L_K(E)$-modules through the $K$-algebra projections $L_K(E)\to L_K(F_1)$, and $L_K(E)\to L_K(F_2)$.

Denote by $\{x,y\}^*$ the set of all words (the empty word included) in the alphabet $\{x,y\}$. 
To each of $v_1$, $u_3$, $w$, $u_1$, and $u_2$ is associated a simple left $L_K(E)$-module: $L_K(E)v_1$, $L_K(E)u_3$, $L_K(E)w$, $L_K(F_1)u'_1$, and $L_K(F_2)u_2$. 
Their injective envelopes are, respectively,  the formal series left $L_K(E)$-modules
\begin{align*}
    X_{v_1}^E&=\Bigg\{\sum_{\mu\in F^+_E(T(v_1))}k_\mu \mu p^*_{v_1,r(\mu)}\mid k_\mu\in K\Bigg\},
    % \\
    % &=\Big\{\sum_{\pi(\alpha,\beta)\in \{\alpha,\beta\}^*} k_{\pi(\alpha,\beta)}\pi(\alpha,\beta)f+
    % \sum_{\pi(\ell_0,\ell_1)\in \{\ell_),\ell_1\}^*}k_{\pi(\ell_0,\ell_1)}\pi(\ell_0,\ell_1)e_0+\\
    % &+
    % k_g gp^*_{v_2,v_1}+\sum_{i\geq 1}k_ip^*_{v_1,v_i}\mid k\in K\Big\}
\end{align*}
\begin{align*}
    X_{u_3}^E&=\Bigg\{\sum_{\mu\in F^+_E(T(u_3))}k_\mu \mu p^*_{u_3,r(\mu)}\mid k_\mu\in K\Bigg\},
    % \\
    % &=\Big\{k_{u_3}u_3+k_{m_2}m_2+\sum_{\pi(\ell_0,\ell_1)\in \{\ell_),\ell_1\}^*}k_{\pi(\ell_0,\ell_1)m_1m_2}\pi(\ell_0,\ell_1)m_1m_2+k_wd_3^*+\\
    % &+k_{d_2}d_2d_3^*+
    % +\sum_{\pi(\ell_0,\ell_1)\in \{\ell_),\ell_1\}^*}k_{\pi(\ell_0,\ell_1)m_1d_2}\pi(\ell_0,\ell_1)m_1d_2d^*_3+\\
    % &+\sum_{\pi(\ell_0,\ell_1)\in \{\ell_),\ell_1\}^*}k_{\pi(\ell_0,\ell_1)d_1}\pi(\ell_0,\ell_1)d_1d_3^*\mid
    %  k\in K\Big\}
\end{align*}
\begin{align*}
    X_w^E
    % &=\{ \sum_{\mu \in \Path(E)w} k_\mu \mu \ | \ k_\mu\in K, \{s(\mu): k_\mu\not=0\} \ \mbox{ is finite}\}\\
    &=\Bigg\{\sum_{\mu\in \Path(E)w}k_\mu \mu\mid k_\mu\in K\Bigg\},
\end{align*}
\begin{align*}
    X_{u_1'}^E&=\Bigg\{\sum_{\mu\in\Path(F_1)u'_1}k_\mu \mu\mid k_\mu\in K\Bigg\}, \  \mbox{and}
\end{align*}
\begin{align*}
    X_{u_2}^E&=\Bigg\{\sum_{\mu\in\Path(F_2)u_2}k_\mu \mu\mid k_\mu\in K\Bigg\}.
\end{align*}
\end{example}

\smallskip

\section{Appendix}\label{sec:appendix}

In this Appendix we present the description of the aforementioned Cuntz-Krieger $E$-family inside ${\rm End}_K(K[[F^+_E(T(v))]])$ when $v$ is a line point in $E$.   
%that The left $R$-premodule structure of $K[[F^+_E(T(v))]]$ when $v$ is a line point 

Let $u\in E^0$ and $e\in E^1$. We define three $K$-linear endomorphisms $P_u$, $S_e$,
$S_{e^*}$ of the $K$-vector space $K[[F^+_E(T(v))]]$ of all functions from   $F^+_E(T(v))$ to $K$.
Let $\mathfrak p\in K[[F^+_E(T(v))]]$. Then for each $\mu\in F^+_E(T(v))$ we define
\[
P_u(\mathfrak p)(\mu):=\begin{cases}
    \mathfrak p(\mu)  & \text{if }s(\mu)=u, \\
     0 & \text{otherwise};
\end{cases}
\]

\[
S_e(\mathfrak p)(\mu):=\begin{cases}\begin{cases}
    \mathfrak p(\mu')  & \text{if }\mu=e\mu', \\
     0 & \text{otherwise};
\end{cases} & \text{if }s(e)\not\in T(v)\\
\begin{cases}
    \mathfrak p(r(e))  & \text{if } \mu=s(e), \\
     0 & \text{otherwise};
\end{cases}
 & \text{if }s(e)\in T(v)
\end{cases}
\]

\[
S_{e^*}(\mathfrak p)(\mu):=\begin{cases}\begin{cases}
    \mathfrak p(e\mu)  & \text{if }s(\mu)=r(e), \\
     0 & \text{otherwise};
\end{cases} & \text{if }s(e)\not\in T(v)\\
\begin{cases}
    \mathfrak p(s(e))  & \text{if } \mu=r(e), \\
     0 & \text{otherwise};
\end{cases}
 & \text{if }s(e)\in T(v).
\end{cases}
\]
Clearly $P_u$, $S_e$, and $S_{e^*}$ are $K$-linear. If
\[\mathfrak p=\sum_{\mu\in F^+_E(T(v))}k_\mu \mu p^*_{v,r(\mu)}\]
then
\begin{align*}
P_u(\mathfrak p)&=\sum_{
\substack{\mu\in F^+_E(T(v))\\ s(\mu)=u}
}k_\mu \mu p^*_{v,r(\mu)}=\sum_{\mu\in F^+_E(T(v))}k_\mu u\mu p^*_{v,r(\mu)}\\
S_e(\mathfrak p)&=\begin{cases}
\displaystyle\sum_{\substack{
\mu\in F^+_E(T(v))\\
\mu=e\mu'}
}k_{\mu'} \mu p^*_{v,r(\mu)}  =
   \sum_{
\mu\in F^+_E(T(v))}
k_{\mu} e\mu p^*_{v,r(\mu)}   & \text{if }s(e)\notin T(v), \\
k_{r(e)}p^*_{v,s(e)}= k_{r(e)}ep^*_{v,r(e)}    & \text{if }s(e)\in T(v).
\end{cases}
\\
S_{e^*}(\mathfrak p)&=\begin{cases}
   \sum_{\substack{
\mu\in F^+_E(T(v))\\
s(\mu)=r(e)
}}k_{e\mu} \mu p^*_{v,r(\mu)}   & \text{if }s(e)\notin T(v), \\
 k_{s(e)}p^*_{v,r(e)}= e^*p^*_{v,s(e)}    & \text{if }s(e)\in T(v)
\end{cases}
\end{align*}
Less formally,  the effect of each of the $K$-endomorphisms $P_u, S_e,$ and $S_{e^*}$ corresponds, respectively,  to multiplying by $u$, $e$ and $e^*$ each of the summands of the formal series  $\mathfrak p=\sum_{\mu\in F^+_E(T(v))}k_\mu \mu p^*_{v,r(\mu)}$.

\textbf{Claim}: The subset $\{P_u, S_e, S_{e^*}\mid u\in E^0, e\in E^1\}$ forms a Cuntz-Krieger $E$-family in $\End(K[[F^+_E(T(v))]])$.

\textbf{Proof of the claim}: We suppress the composition operator $\circ$ in the following equations (function composition is written right to left, so that $fg$ means ``first $g$, then $f$'');   $\delta$ denotes the Kronecker delta. By definition it must be checked that:
\begin{enumerate}
\item $P_uP_w=\delta_{u,w}P_u$ for all $u,w\in E^0$;
\item $P_{s(e)}S_e=S_e=S_eP_{r(e)}$ for all $e\in E^1$;
\item $P_{r(e)}S_{e^*}=S_{e^*}=S_{e^*}P_{s(e)}$ for all $e\in E^1$;
\item $S_{f^*}S_e=\delta_{e,f}P_{r(e)}$ for all $e,f\in E^1$;
\item $\sum_{e\in s^{-1}(u)} S_eS_{e^*}=P_u$ for any regular vertex $u\in E^0$.
\end{enumerate}

\begin{align*}(1)\quad P_u(P_w(\mathfrak p))(\mu)&=\begin{cases}
 P_w(\mathfrak p)(\mu)   & \text{if }s(\mu)=u, \\
    0  & \text{otherwise}
\end{cases}\\
&=\begin{cases}
\mathfrak p(\mu)   & \text{if }s(\mu)=w=u, \\
    0  & \text{otherwise}
\end{cases}\\
&=\delta_{u,w}P_u(\mathfrak p)(\mu)\\
\end{align*}
\text{ for each $\mu\in F^+_E(T(v))$.}

\begin{align*}(2)\quad P_{s(e)}(S_e(\mathfrak p))(\mu)&=\begin{cases}
 S_e(\mathfrak p)(\mu)   & \text{if }s(\mu)=s(e), \\
    0  & \text{otherwise}
\end{cases}\\
&=S_e(\mathfrak p)(\mu)\quad\text{and}
\end{align*}
\begin{align*}\quad S_e( P_{r(e)}(\mathfrak p))(\mu)&=
\begin{cases}\begin{cases}
    (P_{r(e)}(\mathfrak p))(\mu')  & \text{if }\mu=e\mu', \\
     0 & \text{otherwise};
\end{cases} & \text{if }s(e)\not\in T(v)\\
\begin{cases}
     (P_{r(e)}(\mathfrak p))(r(e))  & \text{if } \mu=s(e), \\
     0 & \text{otherwise};
\end{cases}
 & \text{if }s(e)\in T(v)
\end{cases}\\
&=
\begin{cases}\begin{cases}
    \mathfrak p(\mu')  & \text{if }\mu=e\mu', \\
     0 & \text{otherwise};
\end{cases} & \text{if }s(e)\not\in T(v)\\
\begin{cases}
     \mathfrak p(r(e))  & \text{if } \mu=s(e), \\
     0 & \text{otherwise};
\end{cases}
 & \text{if }s(e)\in T(v)
\end{cases}\\
&=S_e(\mathfrak p)(\mu)
\end{align*}

\begin{align*}(3)\quad P_{r(e)}(S_{e^*}(\mathfrak p))(\mu)&=\begin{cases}
 S_{e^*}(\mathfrak p)(\mu)   & \text{if }s(\mu)=r(e), \\
    0  & \text{otherwise}
\end{cases}\\
&=S_{e^*}(\mathfrak p)(\mu)\quad\text{and}
\end{align*}

\begin{align*}\quad S_{e^*}( P_{s(e)}(\mathfrak p))(\mu)&=
\begin{cases}\begin{cases}
    P_{s(e)}(\mathfrak p)(e\mu)  & \text{if }r(e)=s(\mu), \\
     0 & \text{otherwise};
\end{cases} & \text{if }s(e)\not\in T(v)\\
\begin{cases}
     P_{s(e)}(\mathfrak p)(s(e))  & \text{if } \mu=r(e), \\
     0 & \text{otherwise};
\end{cases}
 & \text{if }s(e)\in T(v)
\end{cases}\\
&=\begin{cases}\begin{cases}
    \mathfrak p(e\mu)  & \text{if }r(e)=s(\mu), \\
     0 & \text{otherwise};
\end{cases} & \text{if }s(e)\not\in T(v)\\
\begin{cases}
     \mathfrak p(s(e))  & \text{if } \mu=r(e), \\
     0 & \text{otherwise};
\end{cases}
 & \text{if }s(e)\in T(v)
\end{cases}\\
&=S_{e^*}(\mathfrak p)(\mu)
\end{align*}
\begin{align*}(4)\quad S_{f^*}(S_{e}(\mathfrak p))(\mu)&=
\begin{cases}\begin{cases}
    S_{e}(\mathfrak p)(f\mu)  & \text{if }r(f)=s(\mu), \\
     0 & \text{otherwise};
\end{cases} & \text{if }s(f)\not\in T(v)\\
\begin{cases}
     S_{e}(\mathfrak p)(s(f))  & \text{if } \mu=r(f), \\
     0 & \text{otherwise};
\end{cases}
 & \text{if }s(f)\in T(v)
\end{cases}\\
\text{If $s(e)\notin T(v)$ we get}\\
&=\begin{cases}\begin{cases}
\mathfrak p(\mu)& \text{if } e=f, r(f)=s(\mu), \\
0 & \text{otherwise};
\end{cases} & \text{if }s(f)\not\in T(v)\\
0& \text{if }s(f)\in T(v)
\end{cases}\\
\text{If $s(e)\in T(v)$ we get}\\
&=\begin{cases}
0& \text{if }s(f)\notin T(v)\\
\begin{cases}
\mathfrak p(r(e))& \text{if } s(e)=s(f), r(f)=\mu, \\
0 & \text{otherwise};
\end{cases} & \text{if }s(f)\in T(v)
\end{cases}
\end{align*}

Observe that if $s(e)=s(f)\in T(v)$, then $e=f=p_{s(e),r(e)}$. Therefore, resuming, we have
\begin{align*}\quad S_{f^*}(S_{e}(\mathfrak p))(\mu)&=
\begin{cases}
\mathfrak p(\mu)   & \text{if }e=f,\ s(\mu)=r(f)\\
0  & \text{otherwise}
\end{cases}\\
&=\delta_{e,f}P_{r(f)}(\mathfrak p)(\mu).
\end{align*}
Finally, assume that $u\in E^0$ is a regular vertex. If $u\in T(v)$, necessarily $s^{-1}(u)$ contains only one edge of the form $p_{u,u'}$.  Then
\begin{align*}(5)\quad \sum_{e\in s^{-1}(u)}S_{e}(S_{e^*}(\mathfrak p))(\mu)&=
\begin{cases}\begin{cases}
    S_{\varepsilon^*}(\mathfrak p)(\mu')  & \text{if }\substack{\mu=\varepsilon \mu'\\ \varepsilon\in s^{-1}(u)}, \\
     0 & \text{otherwise};
\end{cases} & \text{if }u\not\in T(v)\\
\begin{cases}
     S_{p_{u,u'}^*}(\mathfrak p)(u')  & \text{if } \mu=u, \\
     0 & \text{otherwise};
\end{cases}
 & \text{if }u\in T(v)
\end{cases}\\
&=
\begin{cases}\begin{cases}
    \mathfrak p(\varepsilon\mu')=\mathfrak p(\mu)  & \text{if }\substack{\mu=\varepsilon \mu'\\ \varepsilon\in s^{-1}(u)}, \\
     0 & \text{otherwise};
\end{cases} & \text{if }u\not\in T(v)\\
\begin{cases}
     \mathfrak p(u)  & \text{if } \mu=u, \\
     0 & \text{otherwise};
\end{cases}
 & \text{if }u\in T(v)
\end{cases}
\end{align*}
Therefore, resuming, we have
\begin{align*}\quad \sum_{e\in s^{-1}(u)}S_{e}(S_{e^*}(\mathfrak p))(\mu)&=
\begin{cases}
\mathfrak p(\mu)   & \text{if }s(\mu)=u\\
0  & \text{otherwise}
\end{cases}\\
&=P_u(\mathfrak p)(\mu).
\end{align*}
 Steps (1) through (5) have established the Claim. By the Universal Property of $R=L_K(E)$ (see e.g., \cite[Remark 1.2.5]{AAM}), there exists a $K$-algebra homomorphism $R\to \End(K[[F^+_E(T(v))]])$, which thereby endows $K[[F^+_E(T(v))]]$ with the structure of left $R$-premodule.

\end{document}